\documentclass[10pt]{amsart}

\usepackage{amsmath,amssymb,amsfonts,epsfig,color}
\input xy
\xyoption{all}

\addtolength{\oddsidemargin}{-40pt}\addtolength{\evensidemargin}{-40pt}
\addtolength{\textwidth}{62pt} \addtolength{\textheight}{35pt}
\addtolength{\topmargin}{-30pt}

\newcommand{\rb}{\raisebox}
\newcommand{\ig}{\includegraphics}
\newcommand\risS[6]{\rb{#1pt}[#5pt][#6pt]{\begin{picture}(#4,15)(0,0)
  \put(0,0){\ig[width=#4pt]{#2.eps}} #3
     \end{picture}}}

\def\b{\beta}

\def\cM{\mathcal M}
\newtheorem{theorem}{Theorem}[section]
\newtheorem{definition}[theorem]{Definition}
\newtheorem{exa}[theorem]{Example}

\newtheorem{lemma}[theorem]{Lemma}
\newtheorem{prop}[theorem]{Proposition}

\newtheorem{rem}[theorem]{Remark}
   %\subsection

\def\wo{\overline}

\def\wt{\widetilde}
\def\R{{\mathbb R}}
\def\Z{{\mathbb Z}}
\def\leq{\leqslant}
\def\geq{\geqslant}
\def\s{\sigma}
\def\bs{\partial\sigma}
\def\ws{\wo\sigma}

\newcommand{\rbetti}[1]{\wt{\b}_{#1}} % reduced
\newcommand{\betti}[1]{\b_{#1}}       % underuced
\newcommand{\rH}{{\wt{H}}} % reduced homology             
\newcommand{\homo}[2]{{H}_{#1}(#2)}               

\newcommand{\rank}{\text{rank}}
\newcommand{\tor}{\text{tor}}

\renewcommand{\ker}{\text{Ker}}

%
%
%%%%%%%%%%%%%%%%%%%%%%%%%%%%%%%%%%%%%%%%%%%%%%%%%%%%%%%%%%%%%%%%%%%%%%%%
\begin{document}%%%%%%%%%%%%%%%%%%%%%%%%%%%%%%%%%%%%%%%%%%%%%%%%%%%%%%%%
%%%%%%%%%%%%%%%%%%%%%%%%%%%%%%%%%%%%%%%%%%%%%%%%%%%%%%%%%%%%%%%%%%%%%%%%

\title[On the Tutte-Krushkal-Renardy polynomial for cell complexes]{On the Tutte-Krushkal-Renardy polynomial for cell complexes}
\author{CARLOS BAJO
, BRADLEY BURDICK, SERGEI CHMUTOV}
\subjclass[2010]{05C10, 05C31, 05C65, 57M15, 57Q15}
\date{}
\address{Department of Mathematics, University of Miami, 
1365 Memorial Drive, Coral Gables, FL 33146. 
{\tt c.bajo@math.miami.edu}}

\address{Department of Mathematics, Ohio State University,
231 West 18th Avenue, Columbus, OH 43210\\
{\tt burdick.28@osu.edu},\qquad
{\tt chmutov@math.ohio-state.edu}}

\keywords{Cell complexes, Tutte polynomial, Krushkal-Renardy polynomial, cellular spanning trees, duality, Bott polynomial.}

\begin{abstract}
Recently V.~Krushkal and D.~Renardy generalized the Tutte polynomial from graphs to cell complexes. We show that evaluating this polynomial at the origin gives the number of cellular spanning trees in the sense of A.~Duval, C.~Klivans, and J.~Martin. Moreover, after a slight modification, the Tutte-Krushkal-Renardy polynomial evaluated at the origin gives a weighted count of cellular spanning trees, and therefore its free term can be calculated by the cellular matrix-tree theorem of Duval et al. In the case of cell decompositions of a sphere, this modified polynomial satisfies the same duality identity as the original polynomial. We find that evaluating the Tutte-Krushkal-Renardy along a certain line gives the Bott polynomial. Finally we prove skein relations for the Tutte-Krushkal-Renardy polynomial.
\end{abstract}

\maketitle

\section*{Introduction} \label{s:intro}
We relate three invariants of cell complexes. The earliest to be introduced was the Bott polynomial first defined by Raoul Bott in 1952 \cite{Bo1,Bo2}. The second invariant we deal with is the number of cellular spanning trees from \cite{DKM1,DKM2}. In these papers, the authors generalize the classical matrix-tree theorem for graphs to arbitrary cell complexes.
G.~Kalai \cite{Ka} first noted that in higher dimensions it makes sense to count spanning trees with weights equal to the square of the order of their codimension one homology groups. Exactly this weighted count of spanning trees is calculated as the determinant of an appropriate submatrix of the Laplacian in \cite{DKM1,DKM2}. Our third invariant is a recent generalization of the Tutte polynomial from \cite{KR}, which we call the {\it Tutte-Krushkal-Renardy polynomial}.

Much of the recent study of the Tutte polynomial from the topological perspective has been influenced by the introduction of the Bollob\'as--Riordan polynomial in \cite{BR3}. In particular, the Tutte-Krushkal-Renardy polynomial from \cite{KR}, has its motivation in a previous work of V.~Krushkal \cite{Kru} about the Tutte polynomial for graphs on surfaces. Indeed, Krushkal and Renardy have also generalized polynomial invariants of graph embeddings in surfaces to arbitrary cell complex embeddings in manifolds \cite{KR}. 
%changed some wording, and added a reference to the 4 variable polynomial. 

We introduce the Tutte-Krushkal-Renardy polynomial in Section~\ref{s:krushkal} and show that its free term is the number of cellular spanning trees in Section~\ref{s:duval}. In Section~\ref{s:modified}, we modify the Tutte-Krushkal-Renardy polynomial so that its free term becomes the weighted number of cellular spanning trees. We prove an analogous duality identity for the modified polynomial as Krushkal and Renardy did for spheres in section~\ref{ss:duality}. In Section~\ref{s:bott}, we show that the Bott polynomials can be obtained from the Tutte-Krushkal-Renardy polynomial by a substitution. Section~\ref{s:c-d-r} is devoted to the contraction-deletion relations for the Tutte-Krushkal-Renardy polynomial. A few examples and concluding remarks are given in Section~\ref{s:remarks}.

%\section*{Acknowledgments}
This work has been done as a part of the Summer 2011 undergraduate research working group
\begin{center}\verb#http://www.math.ohio-state.edu/~chmutov/wor-gr-su11/wor-gr.htm#
\end{center}
``Knots and Graphs" 
at the Ohio State University. We are grateful to all participants of the group for valuable discussions, to the OSU Honors Program Research Fund for the student financial support, and to the Summer Research Opportunities Program at OSU for supporting the first named author (Bajo).
The paper was finished during the third author's (Chmutov) visit to the 
Institut des Hautes \'Etudes Scientifiques (IHES) in Bures-sur-Yvette, France.
He thanks IHES for excellent working conditions and warm hospitality and the OSU MRI Stimulus Program for supporting his travel to France.
We are also grateful to Jeremy Martin and Luca Moci for comments to the first version of this paper and to the referee for his or her comments.
%changed 
\section{The Tutte-Krushkal-Renardy polynomial} \label{s:krushkal}

We refer to \cite{Ha} for the standard notions and facts about CW complexes. We will use the following notation throughout this paper.
\begin {itemize}
\item $K$ is a finite CW complex of dimension $k$; 
\item $K_{(j)}$ denotes the $j$-skeleton of $K$; 
%\item We call a subcomplex $S$ \emph{a spanning subcomplex of dimension $j$} if
%$$K_{(j-1)}\subseteq S\subseteq K_{(j)}\ ;$$
%\item $\mathcal{S}_j$ denotes the set of spanning subcomplexes of dimension $j$;
$\mathcal{S}_j$ denotes the set of spanning subcomplexes of dimension $j$, i.e. subcomplexes $S$ such that
$$K_{(j-1)}\subseteq S\subseteq K_{(j)}\ ;$$
\item $f_j(S)$ stands for the number of cells of dimension $j$ in a subcomplex $S$, and we simply write $f_j=f_j(K)$; 
\item $\betti{j}(S)$ denotes the $j^\text{th}$ Betti number, the rank of the homology group $H_j(S;\Z)$;
\item $\rbetti{j}(S)$ denotes the reduced $j^\text{th}$ Betti number, the rank of the reduced homology group $\rH_j(S;\Z)$.
\end{itemize}
%added the set of spanning subcomplexes, and fixed some wording

\begin{definition}\cite{KR} \rm 
For $1\le j\le k$, the dimension $j$ 
%Tutte-Krushkal-Renardy 
{\it Tutte-Krushkal-Renardy} (TKR)
polynomial, $T_K^j(X,Y)$, is defined by 
$$T^j_K(X,Y)=\sum_{S\in \mathcal{S}_j} X^{\betti{j-1}(S)-\betti{j-1}(K)}Y^{\betti{j}(S)}\ ,
$$ 
Since every spanning complex contains $K_{(j-1)}$, it is useful to identify them with sets of $j$-cells. Thus there are $2^{f_j}$ summands in the sum.
\end{definition}

The dimension $1$ Tutte-Krushkal-Renardy polynomial essentially coincides with
the original Tutte polynomial of the 1-skeleton considered as a graph, $G=K_{(1)}$:
\begin{equation}\label{eq:TP-shift}
T^1_K(X,Y)=T_{G}(X+1,Y+1)\ .
\end{equation}

\bigskip
\begin{definition}\rm  Two cell structures $K$ and $K^*$ on a $k$-manifold $M$ are {\it dual} to each other if there is a one-to-one correspondence between their open cells of complimentary dimensions such that the corresponding $j$-cell $ \s$ of $K$ and $(k-j)$-cell $ \s^*$ of $K^*$ intersect transversely at a single point.
\end{definition}

The cell structure $K^*$ dual to a triangulation $K$ of $M$ can be constructed by setting $\s^*$ to be the union of all simplices of the barycentric subdivision of $K$ intersecting $\s$ only on its barycenter \cite{Ha}. Another way to construct dual cell structure is to use a handle decomposition of $M$. This construction is treated in detail in \cite{RS}.

\begin{theorem}[Duality Theorem for Spheres \cite{KR}] 
Let $K$ and $K^*$ be dual cell structures on $S^k$, then 
$$T^j_{K}(X,Y)=T^{k-j}_{K^*}(Y,X)\ .
$$
\end{theorem}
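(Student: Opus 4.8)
The plan is to match the two sums term by term under a bijection between their index sets and to convert the exponents using Alexander duality on $S^k$. First I would set up the bijection. A spanning subcomplex $S\in\mathcal{S}_j(K)$ is determined by its set of $j$-cells, so define $S^*\in\mathcal{S}_{k-j}(K^*)$ to be the spanning subcomplex whose $(k-j)$-cells are exactly the duals $\sigma^*$ of the $j$-cells $\sigma$ of $K$ with $\sigma\notin S$. Since $\sigma\mapsto\sigma^*$ is a bijection between the $j$-cells of $K$ and the $(k-j)$-cells of $K^*$, the rule $S\mapsto S^*$ (``complement, then dualize'') is a bijection $\mathcal{S}_j(K)\to\mathcal{S}_{k-j}(K^*)$. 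This generalizes the correspondence behind the classical duality $T_G(X,Y)=T_{G^*}(Y,X)$ for planar graphs.

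The geometric core of the argument will be the claim that the open set $S^k\setminus S$ deformation retracts onto $S^*$. Intuitively: $S^k\setminus K_{(j-1)}$ deformation retracts onto the dual skeleton $(K^*)_{(k-j)}$ (each cell $\sigma^*$ meets $\sigma$ in a single interior point and misses the lower skeleta of $K$), and then deleting the remaining open $j$-cells $\mathring\sigma$ of $K$ that lie in $S$ punctures exactly those $(k-j)$-cells $\sigma^*$ of $(K^*)_{(k-j)}$ with $\sigma\in S$ --- that is, exactly the ones absent from $S^*$ --- each of which then retracts onto its boundary; the composite is a deformation retraction onto $S^*$. Making this precise, e.g. through the first barycentric subdivision or the handle decomposition of $S^k$ dual to $K$ \cite{RS,Ha}, is the step I expect to require real care, since it rests on the local geometry of dual cell structures rather than on formal homology.

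Granting this, Alexander duality in $S^k$ gives $\rH_i(S^k\setminus S;\Z)\cong\rH^{k-1-i}(S;\Z)$, and since homology and cohomology of a finite complex have equal rank, $\rbetti{i}(S^k\setminus S)=\rbetti{k-1-i}(S)$ for all $i$; combined with $S^k\setminus S\simeq S^*$ this reads $\rbetti{i}(S^*)=\rbetti{k-1-i}(S)$. Finally I would translate this into the exponents occurring in the definition of $T^j_K$. Using that $K\cong K^*\cong S^k$, so $\rbetti{\ell}(K)=\rbetti{\ell}(K^*)=0$ for $0\le\ell\le k-1$, and that ordinary and reduced Betti numbers agree apart from the $+1$ in degree $0$, one obtains for $1\le j\le k-1$ the two identities $\betti{j-1}(S)-\betti{j-1}(K)=\betti{k-j}(S^*)$ (taking $i=k-j$) and $\betti{j}(S)=\betti{k-j-1}(S^*)-\betti{k-j-1}(K^*)$ (taking $i=k-j-1$); the degree shifts stay in the range where the degree-$0$ corrections cancel. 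Substituting both into $T^j_K(X,Y)=\sum_{S\in\mathcal{S}_j(K)}X^{\betti{j-1}(S)-\betti{j-1}(K)}Y^{\betti{j}(S)}$ and reindexing by $S'=S^*$ over $\mathcal{S}_{k-j}(K^*)$ turns the sum into $\sum_{S'}Y^{\betti{k-j-1}(S')-\betti{k-j-1}(K^*)}X^{\betti{k-j}(S')}=T^{k-j}_{K^*}(Y,X)$, which is the assertion. (The extreme value $j=k$ falls outside the scope, as it would involve the undefined polynomial $T^0$.)
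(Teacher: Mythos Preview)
Your proposal is correct and follows essentially the same route the paper takes. The paper does not reprove this theorem (it is cited from \cite{KR}), but it proves the modified version, Theorem~\ref{th-mdual}, by exactly your strategy: the bijection $S\leftrightarrow S^*$, the homotopy equivalence between one subcomplex and the complement of the other (quoted as a Lemma from \cite{KR}), Alexander duality, and the translation of reduced Betti numbers into the exponents using $\rbetti{\ell}(S^k)=0$ for $0\le \ell\le k-1$. The only cosmetic difference is orientation: the paper's Lemma is phrased as $S\simeq S^k\setminus S^*$ and then applies Alexander duality to $S^*$, whereas you phrase it as $S^*\simeq S^k\setminus S$ and apply Alexander duality to $S$; since $(S^*)^*=S$, these are the same statement. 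Your caveat that the deformation retraction ``requires real care'' is apt---the paper likewise outsources this step to \cite{KR}---and your observation that $j=k$ (hence $T^0$) falls outside the scope matches the restriction $1\le j\le k-1$ the paper imposes in Theorem~\ref{th-mdual}.
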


When $K$ is a (planar) graph embedded in a sphere $S^2$, the theorem becomes the celebrated duality theorem for the Tutte polynomial of a planar graph and its planar dual. 

\begin{rem}\rm
If $K$ is a cell structure on a compact $k$-manifold, then the dimension $k$ Tutte-Krushkal-Renardy polynomial is easily expressed in terms of $f_k$. First for a disjoint union of manifolds, the TKR polynomial is just the product of the TKR polynomials of each component. For every connected, compact $k$-dimensional manifold we may express the TKR polynomial explicitly. 
\begin{equation}\label{eq:TKR-mfld}
T^k_K(X,Y)=
\begin{cases}\displaystyle
Y+\frac{(1+X)^{f_k}-1}{X} &	\text{if }K\text{ is closed orientable,}
              \vspace{10pt}\\
(1+X)^{f_k} & \text{if } K\text{ is non-orientable or has a boundary.}
\end{cases}
\end{equation}

The argument proceeds as follows. 
We partition the $ \mathcal{S}_k$ by the number of $k$-cells in $S\in\mathcal{S}_k$. So there are precisely  $\binom{f_k}{i}$ such $S\in \mathcal{S}_k$ with $i$ $k$-cells. For closed, orientable manifolds, $Y$ appears as the term with $S=K$ whose exponent corresponds to the fundamental class $[K]$, and those subcomplexes $S\neq K$ with $i$ $k$-cells contribute $X^{f_k-i-1}$. 
For other manifolds, those $S\in\mathcal{S}_k$ with $i$ $k$-cells contribute $X^{f_k-i}$. For graphs, the only examples of connected manifolds are the cycle and path graphs, and the above characterizes the Tutte polynomial for those families.
% by equation (\ref{eq:TP-shift}). 
%changed to include nonorientable/with boundary. Reworded the explanation. Added path graphs. 
\end{rem}

%I didn't like this remark. the remark 6.6 covers this.
%\begin{rem}\rm
%Most of the results in this paper are stated for the ``top dimension'', i.e. $T^k_K(X,Y)$, but since $T^j_K(X,Y)=T^j_{K_{(j)}}(X,Y)$, we need not consider the cases $j\neq k$.
%\end{rem}

\section{Cellular Spanning Trees} \label{s:duval}

\begin{definition}\label{def:CST}\rm  \cite{DKM1,DKM2} 
A dimension $j$ {\it Cellular Spanning Tree} (hence $j$-CST)  of $K$ is any $S\in \mathcal{S}_j$ that satisfies the following conditions:\\
(1) $\rH_j(S)=0$,
\qquad\quad
(2) $\rbetti{j-1}(S)=0$,  
\qquad\quad
(3) $ f_j(S)= f_{j}(K) - \rbetti{j}(K_{(j)})+\rbetti{j-1}(K_{(j)})$ \footnote{There is a typo in \cite{DKM1,DKM2} where in the third condition the whole complex $K$ is used instead of $K_{(j)}$.}\ ,\\
 
The second condition implies that the homology group $\rH_{j-1}(S)$ is finite; we will use the notation $|\rH_{j-1}(S)|$ for its order.
\end{definition}

For $j=0$, these conditions mean that $S$ consists of a single point, a vertex
(0-cell) of $K$.
For $j=1$, there is a classic graph theoretical theorem stating that any two of these conditions imply the third one.

\begin{theorem}[``The Two Out of Three Theorem'' \cite{DKM1}]\label{th:2out3}
Let $S\in\mathcal{S}_j$, then any two of the conditions (1), (2), and (3) together imply the third one.
\end{theorem}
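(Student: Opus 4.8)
The plan is to work with the cellular chain complex of $S$ over $\Z$ and translate the three conditions into statements about the boundary maps $\partial_j\colon C_j(S)\to C_{j-1}(S)$ and $\partial_{j-1}\colon C_{j-1}(S)\to C_{j-2}(S)$. Since $S$ is a spanning subcomplex, $C_i(S)=C_i(K_{(j)})$ for all $i\le j-1$, and $C_j(S)\subseteq C_j(K_{(j)})$ is the free abelian group on the chosen $j$-cells. The key observation is the rank--nullity identity $f_j(S)=\rank\partial_j + \rank\ker\partial_j$, where $\rank\ker\partial_j = \rbetti{j}(S) + \rank\im\partial_{j+1}^{S}$; but in $S$ there are no $(j+1)$-cells, so $\ker\partial_j = Z_j(S) = H_j(S)$ as a group and $\rbetti{j}(S)=\rank\ker\partial_j$. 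Likewise, in degree $j-1$ one has $\rbetti{j-1}(S) = \rank\ker\partial_{j-1} - \rank\im\partial_j = \rank\ker\partial_{j-1} - \rank\partial_j$, and the quantities $\rank\ker\partial_{j-1}$ and $\rank\partial_{j-2}$ depend only on $K_{(j)}$, not on $S$. The plan is therefore to record these two linear relations and observe that the only $S$-dependent quantities are $f_j(S)$, $\rbetti{j}(S)$, and $\rank\partial_j$.

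Concretely, I would first establish the master equation
\[
f_j(S) \;=\; \rbetti{j}(S) \;+\; \rank\partial_j^{S},
\]
which is just rank--nullity for $\partial_j$ on $C_j(S)$ together with the fact that $S$ has no cells above dimension $j$. Next I would compute $\rbetti{j-1}(S)$: since $\ker\partial_{j-1}$ and $\im\partial_{j-2}$ are intrinsic to $K_{(j)}$ (write $z_{j-1}:=\rank\ker\partial_{j-1}$, $b_{j-2}:=\rank\partial_{j-2}$, so $\rbetti{j-1}(K_{(j)})$ restricted to cycles is $z_{j-1}-b_{j-2}$ minus the image of $\partial_j$ taken over all of $K_{(j)}$), one gets
\[
\rbetti{j-1}(S) \;=\; \bigl(z_{j-1}-b_{j-2}\bigr) \;-\; \rank\partial_j^{S}.
\]
Finally I would evaluate the constant appearing in condition (3): applying the master equation and the $\rbetti{j-1}$ formula to $S=K_{(j)}$ itself gives $f_j(K) = \rbetti{j}(K_{(j)}) + \rank\partial_j^{K_{(j)}}$ and $\rbetti{j-1}(K_{(j)}) = (z_{j-1}-b_{j-2}) - \rank\partial_j^{K_{(j)}}$, so that $f_j(K) - \rbetti{j}(K_{(j)}) + \rbetti{j-1}(K_{(j)}) = (z_{j-1}-b_{j-2})$, the $S$-independent quantity. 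Thus condition (3) is exactly $f_j(S) = z_{j-1}-b_{j-2}$.

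With these three identities in hand the theorem is immediate bookkeeping. Adding the master equation for $S$ and the $\rbetti{j-1}(S)$ formula and cancelling $\rank\partial_j^{S}$ yields the single relation
\[
f_j(S) \;+\; \rbetti{j-1}(S) \;=\; \rbetti{j}(S) \;+\; \bigl(z_{j-1}-b_{j-2}\bigr),
\]
i.e. $f_j(S) + \rbetti{j-1}(S) = \rbetti{j}(S) + \bigl(f_j(K)-\rbetti{j}(K_{(j)})+\rbetti{j-1}(K_{(j)})\bigr)$. Reading this equation three ways — solving for the quantity in (3), using $\rbetti{j}(S)=0$ from (1), $\rbetti{j-1}(S)=0$ from (2) — shows that any two of (1), (2), (3) force the third, since condition (1) is literally $\rbetti{j}(S)=0$ (as $\rH_j(S)$ is free of rank $\rbetti{j}(S)$, it vanishes iff $\rbetti{j}(S)=0$), condition (2) is $\rbetti{j-1}(S)=0$, and condition (3) is the statement that the left side minus $\rbetti{j-1}(S)$ equals the stated constant. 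I expect the only delicate point to be the careful use of reduced versus unreduced homology and making sure the $j=1$ (and $j=0$) edge cases, where $C_{j-2}$ may need the augmentation, are handled so that $\rbetti{}$ rather than $\betti{}$ appears throughout; once the reduced chain complex is used uniformly, the rank computations are routine.
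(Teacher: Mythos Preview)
The paper does not supply its own proof of this theorem; it is quoted from \cite{DKM1} and used as a black box. Your argument is the standard one (essentially an Euler-characteristic / rank--nullity computation on the cellular chain complex) and is correct in outline and in its conclusion: the single identity
\[
f_j(S) + \rbetti{j-1}(S) \;=\; \rbetti{j}(S) + \bigl(f_j(K)-\rbetti{j}(K_{(j)})+\rbetti{j-1}(K_{(j)})\bigr)
\]
holds for every $S\in\mathcal{S}_j$, and together with the freeness of $\rH_j(S)$ it immediately gives all three implications.

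There is one slip in the bookkeeping. You write $\rbetti{j-1}(S) = (z_{j-1}-b_{j-2}) - \rank\partial_j^{S}$ with $b_{j-2}:=\rank\partial_{j-2}$, but $\partial_{j-2}$ plays no role in $H_{j-1}$: the correct formula is simply $\rbetti{j-1}(S)=z_{j-1}-\rank\partial_j^{S}$ (where $z_{j-1}=\rank\ker\partial_{j-1}$ in the reduced complex), matching what you wrote two lines earlier. This error is harmless for the proof, because you apply the same (mis-)formula to $S=K_{(j)}$ when identifying the constant on the right, so the spurious $b_{j-2}$ cancels; but the displayed formula for $\rbetti{j-1}(S)$ should be corrected. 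With that fix, and the reduced chain complex used uniformly so that the $j=1$ case goes through, the argument is complete.
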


For graphs, a spanning tree exists if and only if the graph is connected. We likewise need a condition to consider the existence of a CST. 

\begin{definition}\cite{DKM1}\rm  A CW complex $K$ of dimension $k$ is called \emph{acyclic in positive codimension} (APC) if $\rbetti{j}=0$ for all $j<k$. 
\end{definition}

For example a complex homotopy equivalent to a wedge of several homology spheres of the same dimension is APC. In particular, a connected graph is homotopy equivalent to a wedge of several circles and so is APC.

\begin{theorem}\cite{DKM1} $K$ is APC if and only if $K$ has a $j$-CST for all $j\leq k$.
\end{theorem}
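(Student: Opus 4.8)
The plan is to recast both implications in terms of the integral cellular chain complex, with boundary maps $\partial_j\colon \C{j}{K}\to\C{j-1}{K}$, using two elementary observations. First, attaching cells of dimension greater than $i+1$ does not change $H_i$, so $\rbetti{i}(K_{(j)})=\rbetti{i}(K)$ whenever $i<j\le k$; in particular, if $K$ is APC then $\rbetti{j-1}(K_{(j)})=0$ for $1\le j\le k$. Second, for $S\in\mathcal{S}_j$ — which I identify with its set of $j$-cells — one has $\rH_j(S)=\ker(\partial_j|_{\C{j}{S}})$ since $S$ has no $(j+1)$-cells, while the group of $(j-1)$-cycles is the same for $S$, $K_{(j)}$, and $K$ (it is determined by the cells of dimension $j-1$ and $j-2$, all of which lie in $K_{(j-1)}\subseteq S$). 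Writing $r_j=\rank\partial_j$, this yields $\rbetti{j}(K_{(j)})=f_j-r_j$.

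For the forward implication I would, assuming $K$ is APC, construct a $j$-CST for each $j\le k$ directly. The case $j=0$ is handled by taking $S$ to be a single vertex. For $1\le j\le k$, since $\rbetti{j-1}(K_{(j)})=0$, condition~(3) of Definition~\ref{def:CST} becomes simply $f_j(S)=f_j-\rbetti{j}(K_{(j)})=r_j$. So I would take a set of $j$-cells indexing a column basis of $\partial_j$ over $\mathbb{Q}$ (necessarily of size $r_j$) and let $S$ be the corresponding spanning subcomplex. Condition~(1) then holds because any nonzero element of $\ker(\partial_j|_{\C{j}{S}})$ would give a nonzero rational relation among those columns; condition~(3) holds by construction; and Theorem~\ref{th:2out3} supplies condition~(2) for free (alternatively, one checks directly that $\rbetti{j-1}(S)=\dim_{\mathbb{Q}}(Z_{j-1}\otimes\mathbb{Q})-r_j=\rbetti{j-1}(K_{(j)})=0$). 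Hence $S$ is a $j$-CST.

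For the converse I would fix $i<k$, take an $(i+1)$-CST $S$ (which exists by hypothesis, since $i+1\le k$), and use only its condition~(2), namely $\rbetti{i}(S)=0$. Because $S$ contains every $i$-cell of $K$, the $i$-cycle group of $S$ coincides with that of $K_{(i+1)}$ while its $i$-boundary group is contained in that of $K_{(i+1)}$; hence the inclusion $S\hookrightarrow K_{(i+1)}$ induces a surjection $\rH_i(S)\twoheadrightarrow\rH_i(K_{(i+1)})$, forcing $\rbetti{i}(K_{(i+1)})=0$, and therefore $\rbetti{i}(K)=\rbetti{i}(K_{(i+1)})=0$ by the first observation above. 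As $i<k$ was arbitrary, $K$ is APC.

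There is no single hard obstacle here; the argument is essentially bookkeeping. The one conceptual point to get right is \emph{where} the APC hypothesis enters the forward direction: it is exactly what makes condition~(2) (equivalently, the full condition~(3)) attainable, since without it the subcomplex built from a column basis of $\partial_j$ still has $\rH_j(S)=0$ and $f_j(S)=r_j$ but satisfies only $\rbetti{j-1}(S)=\rbetti{j-1}(K_{(j)})$, which need not vanish. Minor care is also needed with reduced versus unreduced homology — in particular the degenerate indices at $j=0$, where one should verify that a single vertex meets all three conditions — and with the fact that condition~(1) is an integral statement whereas the column-independence argument is rational (a nonzero integral kernel vector would force a nonzero rational relation, so the integral kernel is trivial as well).
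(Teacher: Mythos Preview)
Your argument is correct. Note, however, that the paper does not give its own proof of this statement: it is quoted as a result of Duval--Klivans--Martin and cited to \cite{DKM1}, so there is nothing in the paper to compare your approach against. The sentence immediately following the theorem (``For any fixed $j$, one only needs to assume that $\rbetti{j-1}=0$ for a $j$-CST to exist'') is consistent with the mechanism of your forward direction, where the APC hypothesis enters precisely to force $\rbetti{j-1}(K_{(j)})=0$ so that the column-basis subcomplex meets condition~(3).

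A couple of small points you could tighten. In the converse, what you actually use is that a surjection of abelian groups cannot increase rank, so $\rbetti{i}(K_{(i+1)})\le\rbetti{i}(S)=0$; you might state that explicitly rather than just ``forcing'' the vanishing. And in the forward direction, your appeal to Theorem~\ref{th:2out3} is unnecessary since you have already observed directly that $\rbetti{j-1}(S)=\rank Z_{j-1}-r_j=\rbetti{j-1}(K_{(j)})=0$; invoking two-out-of-three here is circular in spirit, because the cited proof of that theorem in \cite{DKM1} is essentially the same rank computation. Otherwise the bookkeeping is sound, including the $j=0$ case and the passage between integral and rational independence.
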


For any fixed $j$, one only needs to assume that $\rbetti{j-1}=0$ for a $j$-CST to exist, but we will assume that $K$ is APC to simplify our hypotheses. 
%added this note

Henceforth let $\mathcal{T}_j(K)$ denote the set of $j$-CST's of $K$ and let $\tau_j(K)$ denote its cardinality. 
For $K$ being APC we are guaranteed that $\tau_j(K)\neq0$, and we can introduce the invariant inspired by G.~Kalai \cite{Ka}: the number of $j$-CST's, $S$, counted with the weights $|\rH_{j-1}(S)|^2$.

\begin{definition}\cite{DKM2}\rm 
$$\wt{\tau}_j(K) :=\sum_{S\in\mathcal{T}_j(K)} |\rH_{j-1}(S)|^2.$$
\end{definition}

For a connected graph $K$ ($k=1$), the invariant $\wt{\tau}_1(K)$ is equal to the number of its spanning trees because
the group $\rH_0(S)$ is always trivial for connected complexes. The classical matrix tree theorem states that for a graph the number of its spanning trees is equal to a cofactor of the Laplacian associated with a graph. This theorem was generalized to higher dimension in \cite{DKM1,DKM2}. Thus $\wt{\tau}_j(K)$ can be calculated as a determinant of an appropriate matrix.

There is a different generalization of a notion of spanning tree to higher dimension suitable for so called ``Pfaffian Matrix Tree Theorem" \cite{MV1,MV2}.  Their spanning trees are CST's in our sense, but the opposite, in general, is not true.

\subsection{Free term of the Tutte-Krushkal-Renardy polynomial.} \label{ss:KR-free}
A classical evaluation of the Tutte polynomial $T_{K_{(1)}}(1,1)$ gives the number of spanning trees in the graph $K_{(1)}$. Because of the shift of variables in \eqref{eq:TP-shift} it is equal to the evaluation of the first Tutte-Krushkal-Renardy polynomial at the origin,
$T_{K_{(1)}}(1,1)=T^1_K(0,0)$.
Analogously we have the following.

\begin{theorem}\label{th:n-sp}
For an APC complex $K$ and $1\leq j\leq k$,\quad $T^j_K(0,0)=\tau_j(K)$.
\end{theorem}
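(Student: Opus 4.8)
The plan is to evaluate $T^j_K(0,0)=\sum_{S\in\mathcal S_j}0^{\betti{j-1}(S)-\betti{j-1}(K)}0^{\betti{j}(S)}$ and observe that a summand survives precisely when both exponents vanish. So the first step is to record that $T^j_K(0,0)$ counts exactly those $S\in\mathcal S_j$ with $\betti{j}(S)=0$ and $\betti{j-1}(S)=\betti{j-1}(K)$. The goal is then to show that this set of $S$ coincides with $\mathcal T_j(K)$, the set of $j$-CST's. (One must also check the summands are genuinely $1$, not $0^0$ ambiguities: the convention $0^0=1$ is exactly what makes the free term a count, and it should be stated explicitly.)

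Next I would translate the two surviving conditions into the language of Definition~\ref{def:CST}. The condition $\betti{j}(S)=0$ is equivalent to $\rbetti{j}(S)=0$, i.e. $\rH_j(S)=0$, which is condition~(1). For the condition $\betti{j-1}(S)=\betti{j-1}(K)$ I would use the APC hypothesis: since $K$ is APC, $\rbetti{j-1}(K)=0$ for $j-1<k$, and in the boundary case $j=k$ one checks separately (or notes that $\betti{k-1}(K)=\betti{k-1}(K_{(k-1)})$ is forced since $S$ and $K$ share the $(k-1)$-skeleton and adding $k$-cells only kills $(k-1)$-cycles). Thus $\betti{j-1}(S)=\betti{j-1}(K)$ becomes $\rbetti{j-1}(S)=0$, which is condition~(2). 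Hence the surviving $S$ are exactly those satisfying (1) and (2), and by Theorem~\ref{th:2out3} (the Two Out of Three Theorem) they automatically satisfy (3) as well, so they are precisely the $j$-CST's. This gives $T^j_K(0,0)=\tau_j(K)$.

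The one genuine subtlety — and the step I expect to be the main obstacle — is the comparison $\betti{j-1}(S)\overset{?}{=}\betti{j-1}(K)$ when $j<k$: here $\betti{j-1}(K)$ refers to the full complex $K$, whereas the natural object attached to the skeleton data is $\betti{j-1}(K_{(j)})$. One needs that $\rbetti{j-1}(K)=\rbetti{j-1}(K_{(j)})=0$; the first equality holds because cells of dimension $>j$ do not affect $H_{j-1}$ (cellular chain complex argument), and then APC gives the vanishing. Conversely, for any $S\in\mathcal S_j$, the inclusion $S\hookrightarrow K_{(j)}$ induces a surjection on $H_{j-1}$ (since $S$ already contains all $(j-1)$-cells, every $(j-1)$-cycle of $K_{(j)}$ is one of $S$), so $\betti{j-1}(S)\ge\betti{j-1}(K_{(j)})=\betti{j-1}(K)$, with equality iff no new relations appear iff $\rbetti{j-1}(S)=0$. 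Once this bookkeeping is in place the theorem follows immediately; I would present the argument as: surviving summands $\iff$ (1) and (2) $\iff$ (by Two Out of Three) $j$-CST, and then $T^j_K(0,0)=|\mathcal T_j(K)|=\tau_j(K)$.
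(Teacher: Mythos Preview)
Your approach is the same as the paper's: identify the summands that survive at $(0,0)$ as those $S$ satisfying conditions (1) and (2) of Definition~\ref{def:CST}, and then invoke Theorem~\ref{th:2out3}.

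There is one genuine slip. You write that $\betti{j}(S)=0$ is ``equivalent to $\rbetti{j}(S)=0$, i.e.\ $\rH_j(S)=0$'', but $\rbetti{j}(S)=0$ only says the \emph{rank} of $\rH_j(S)$ vanishes; a priori $\rH_j(S)$ could be nontrivial torsion, and condition~(1) demands the group itself be zero. The paper fills this gap in one line: since $\dim S\le j$, the top homology $H_j(S)=\ker\partial_j$ is a subgroup of a free abelian group and hence free, so rank~$0$ forces $H_j(S)=0$, and then $\rH_j(S)=0$ because $j\ge 1$. You should insert this observation.

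Your third paragraph is unnecessary and partly confused. There is no ``boundary case $j=k$'': APC means $\rbetti{i}(K)=0$ for all $i<k$, and since $1\le j\le k$ we always have $j-1<k$, so $\rbetti{j-1}(K)=0$ directly. Once that is known, the $X$-exponent $\betti{j-1}(S)-\betti{j-1}(K)=\rbetti{j-1}(S)-\rbetti{j-1}(K)=\rbetti{j-1}(S)$ is simply a rank, hence nonnegative, and vanishes iff condition~(2) holds. No comparison of $K$ with $K_{(j)}$ and no surjectivity argument on $H_{j-1}$ is needed. (Your parenthetical claim that $\betti{k-1}(K)=\betti{k-1}(K_{(k-1)})$ is in general false---adding $k$-cells typically \emph{does} lower $\betti{k-1}$---but fortunately you never need it.)
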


\begin{proof} 
Note that the exponent of $X$ in the Tutte-Krushkal-Renardy polynomial is equal to $\betti{j-1}(S)-\betti{j-1}(K)=\rbetti{j-1}(S)-\rbetti{j-1}(K)$.
Since $K$ is APC, $\rbetti{j-1}(K)=0$. Then $$T^j_K(X,Y)=\sum_{S\in \mathcal{S}_j} X^{\rbetti{j-1}(S)}Y^{\betti{j}(S)}.
$$
Now the evaluation $T^j_K(0,0)$ is equal to the number of subcomplexes 
$S\in\mathcal{S}_j$ such that $\rbetti{j-1}(S)=0$ and
$\betti{j}(S)=0$. Note that $S$ has dimension less than or equal to $j$, therefore its highest homology group $H_j(S)$ is a free abelian group of rank $\betti{j}(S)=0$. Thus it is trivial, and $\rH_j(S)=0$. Thus conditions (1) and (2) of Definition \ref{def:CST} are satisfied. By theorem \ref{th:2out3}, $S$ must be a cellular spanning tree. In the other direction, $S\in \mathcal{T}_j(K)$ has $\rbetti{j-1}(S)=0$ and $\rH_j(S)=0$. Since $j\geq1$, the reduced homology group $\rH_j(S)$ is isomorphic to the unreduced group $H_j(S)$, and therefore $\betti{j}(S)=0$. Thus $S$ contributes 1 to the evaluation $T^j_K(0,0)$.
\end{proof}

\section{The Modified Tutte-Krushkal-Rendardy Polynomial}\label{s:modified}

For an abelian group $G$, let $\tor(G)$ denote the torsion subgroup of $G$.

\begin{definition}\rm  
We denote the dimension $j$ {\it modified Tutte-Krushkal-Renardy polynomial} of $K$ as $\wt{T}^j_K(X,Y)$, and it is defined as follows.
$$\wt{T}^j_K(X,Y)=\sum_{S\in\mathcal{S}_j} |\tor(\homo{j-1}{S})|^2 X^{\betti{j-1}(S)-\betti{j-1}(K)} Y^{\betti{j}(S)}\ .
$$
\end{definition}

\begin{theorem}\label{th:mn-sp} If $K$ is APC and $j\geq1$, then 
$$\wt{T}^j_K(0,0)=\wt{\tau}_j(K)$$
\end{theorem}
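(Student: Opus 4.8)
The plan is to mimic the proof of Theorem~\ref{th:n-sp}, tracking the extra torsion weight through the specialization at the origin. First I would observe, exactly as before, that since $K$ is APC we have $\rbetti{j-1}(K)=0$, so the exponent of $X$ in each summand is simply $\rbetti{j-1}(S)=\betti{j-1}(S)$. Hence
$$\wt{T}^j_K(0,0)=\sum_{\substack{S\in\mathcal{S}_j\\ \betti{j-1}(S)=0,\ \betti{j}(S)=0}} |\tor(\homo{j-1}{S})|^2\ .$$
So the task reduces to identifying the index set of this sum with $\mathcal{T}_j(K)$ and identifying the weight $|\tor(\homo{j-1}{S})|^2$ with $|\rH_{j-1}(S)|^2$ on that set.

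For the index set: the condition $\betti{j-1}(S)=0$ is the same as $\rbetti{j-1}(S)=0$ (condition (2) of Definition~\ref{def:CST}), and the condition $\betti{j}(S)=0$ together with the fact that $H_j(S)$ is free abelian (as $S$ has dimension at most $j$) forces $H_j(S)=0$, hence $\rH_j(S)=0$ since $j\geq1$ (condition (1)). By the Two Out of Three Theorem~\ref{th:2out3}, conditions (1) and (2) imply (3), so $S\in\mathcal{T}_j(K)$; conversely any $S\in\mathcal{T}_j(K)$ satisfies $\betti{j-1}(S)=0$ and, as $\rH_j(S)=H_j(S)=0$, also $\betti{j}(S)=0$, so it appears in the sum. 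This is verbatim the argument already given for Theorem~\ref{th:n-sp}.

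For the weight: on the index set we have $\rbetti{j-1}(S)=0$, i.e. $\rH_{j-1}(S)$ is a finite abelian group, so it equals its own torsion subgroup. The only subtlety is the relation between the \emph{unreduced} group $\homo{j-1}{S}$ appearing in the modified polynomial and the \emph{reduced} group $\rH_{j-1}(S)$ appearing in $\wt{\tau}_j(K)$. For $j-1\geq1$ these coincide, so $|\tor(\homo{j-1}{S})|=|\rH_{j-1}(S)|$ immediately. For $j=1$ one has the short exact sequence $0\to\rH_0(S)\to H_0(S)\to\Z\to0$, which splits, so $\homo{0}{S}\cong \rH_0(S)\oplus\Z$ and $\tor(\homo{0}{S})=\tor(\rH_0(S))=\rH_0(S)$ (the last equality because $\rH_0(S)$ is finite here, in fact trivial when $S$ is connected, which condition (2) guarantees). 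Either way $|\tor(\homo{j-1}{S})|^2=|\rH_{j-1}(S)|^2$, and summing over $S\in\mathcal{T}_j(K)$ gives $\wt{\tau}_j(K)$.

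The proof is essentially bookkeeping; the only place requiring genuine care — the main (minor) obstacle — is the $j=1$ torsion comparison, making sure the "unreduced versus reduced $H_{j-1}$" discrepancy does not corrupt the weight. Since the reduced degree-$0$ homology is always torsion-free (indeed free), and on our index set it is trivial, the torsion subgroups agree and the discrepancy is harmless; for $j\geq2$ there is no discrepancy at all. I would state this comparison as a one-line remark rather than belabor it.
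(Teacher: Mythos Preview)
Your proposal is correct and follows essentially the same approach as the paper's proof, which simply refers back to the argument of Theorem~\ref{th:n-sp} and observes that for a CST $S$ the weight $|\tor(\homo{j-1}{S})|^2$ equals $|\rH_{j-1}(S)|^2$. If anything, you are more careful than the paper in distinguishing the $j=1$ case (where $\homo{0}{S}$ is infinite but torsion-free) from the $j\geq 2$ case; the paper elides this by writing ``$|\tor(\homo{j-1}{S})|^2$, which in turn is just $|\homo{j-1}{S}|^2$,'' a statement that is literally false for $j=1$ though the intended conclusion is correct.
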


\begin{proof} 
As in the proof of Theorem \ref{th:n-sp} only cellular spanning trees contribute to $\wt{T}^j_K(0,0)$. Only now the contribution of a CST $S$
is equal to $|\tor(\homo{j-1}{S})|^2$, which in turn is just $|\homo{j-1}{S}|^2$.
\end{proof}

\begin{rem}\rm
The modified Tutte-Krushkal-Renardy polynomial is an example of a  quasi-arithmetic Tutte polynomial from \cite{BM,DAM} with the multiplicity function $m(S):=|\tor(\homo{j-1}{S})|^2$ associated with matroid $\cM$ from Remark \ref{rem-mat}. One can show that this multiplicity function satisfies the axioms (A1) and (A2) of quasi-arithmetic matroids from \cite{BM}. If the
kernel of the boundary map $\partial_j: C_j(K;\R)\to C_{j-1}(K;\R)$ is at most 2-dimensional (that is the nullity of the matroid $\cM$ is at most 2), then this multiplicity function satisfies also the axiom (P) and the quasi-arithmetic matroid $\cM$ is in fact arithmetic. However, we do not know
whether the axiom (P) is satisfied in general.
\end{rem}

\bigskip
\subsection{Duality Theorem for the modified Tutte-Krushkal-Renardy polynomial}\label{ss:duality}

\begin{theorem}[The Duality Theorem for $\wt{T}^j_K(X,Y)$]\label{th-mdual}
If $K$ and $K^*$ are dual cell decompositions of $S^k$ and if $1\leq j\leq k-1$, then $$\wt{T}^j_K(X,Y)=\wt{T}^{k-j}_{K^*}(Y,X).
$$
\end{theorem}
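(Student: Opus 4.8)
The plan is to deduce this from the Duality Theorem for Spheres by checking that, under the bijection used in its proof, the extra torsion weights also correspond. Recall that one identifies a spanning complex $S\in\mathcal{S}_j(K)$ with its set $A$ of $j$-cells of $K$ and sends it to the spanning complex $S^*:=K^*_{(k-j-1)}\cup\bigcup_{\sigma\notin A}\overline{\sigma^*}\in\mathcal{S}_{k-j}(K^*)$ spanned by the duals of the $j$-cells \emph{not} in $A$; this is a bijection $\mathcal{S}_j(K)\to\mathcal{S}_{k-j}(K^*)$ under which the exponents of $X$ and $Y$ in the two summations get interchanged term by term. Since $\wt{T}^j_K$ and $\wt{T}^{k-j}_{K^*}$ differ from $T^j_K$ and $T^{k-j}_{K^*}$ only by inserting the coefficients $|\tor(\homo{j-1}{S})|^2$, respectively $|\tor(\homo{k-j-1}{S^*})|^2$, in front of the monomial contributed by $S$, respectively $S^*$, it suffices to prove that $|\tor(\homo{j-1}{S})|=|\tor(\homo{k-j-1}{S^*})|$ for every $S\in\mathcal{S}_j(K)$.

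For this I would use Alexander duality in $S^k$. The geometric input is that the open set $S^k\setminus S$ deformation retracts onto $S^*$: this is the standard behaviour of dual cell decompositions, since $S^k\setminus K_{(j)}$ deformation retracts onto $K^*_{(k-j-1)}$, and re-inserting the open $j$-cells outside $A$ into the complement corresponds, under this retraction, to attaching exactly the closed dual $(k-j)$-cells making up $S^*$; it can be read off from a handle decomposition of $S^k$ as in \cite{RS}. Granting this, Alexander duality — applicable because $S$ is a nonempty proper compact CW subcomplex of $S^k$, using $1\le j\le k-1$ — gives isomorphisms $\rH_i(S;\Z)\cong\rH^{\,k-i-1}(S^k\setminus S;\Z)\cong\rH^{\,k-i-1}(S^*;\Z)$ for all $i$. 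Taking $i=j-1$ and applying the universal coefficient theorem, the torsion subgroup of $\rH^{\,k-j}(S^*;\Z)$ is $\mathrm{Ext}\bigl(\rH_{k-j-1}(S^*;\Z),\Z\bigr)\cong\tor\,\rH_{k-j-1}(S^*;\Z)$, hence $\tor\,\rH_{j-1}(S)\cong\tor\,\rH_{k-j-1}(S^*)$. Since $j\ge1$ and $k-j\ge1$, reduced and unreduced homology have the same torsion in degrees $j-1$ and $k-j-1$, so $|\tor(\homo{j-1}{S})|=|\tor(\homo{k-j-1}{S^*})|$, as needed. The same isomorphism on ranks, together with $\rbetti{i}(K)=\rbetti{i}(K^*)=0$ for $i<k$ (as $K$ and $K^*$ decompose $S^k$), re-derives the matching of the $X$- and $Y$-exponents and completes the identification $\wt{T}^j_K(X,Y)=\wt{T}^{k-j}_{K^*}(Y,X)$.

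I expect the one genuine obstacle to be the geometric claim $S^k\setminus S\simeq S^*$, i.e.\ controlling the deformation retraction of the complement of the dual skeleton as the individual open cells outside $A$ are added back; everything afterwards is the purely formal chain Alexander duality $\Rightarrow$ universal coefficients $\Rightarrow$ equality of torsion orders. A clean way to organize the write-up is to first isolate this homotopy equivalence as a lemma — or to quote it from the proof of the Duality Theorem for Spheres in \cite{KR}, where essentially the same fact is already used to match the Betti-number exponents — and then run the Alexander-duality argument above.
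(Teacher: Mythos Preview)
Your proposal is correct and is essentially the paper's own argument: the paper likewise quotes the Krushkal--Renardy lemma giving $S^*\simeq S^k\setminus S$, then combines Alexander duality with the universal coefficient theorem to obtain $\tor(\rH_{j-1}(S))\cong\tor(\rH_{k-j-1}(S^*))$ together with the matching of Betti numbers. The only cosmetic difference is the order in which UCT and Alexander duality are applied (the paper passes through $\rH^j(S)\cong\rH_{k-j-1}(S^*)$ and reads off torsion from the UCT decomposition on the $S$ side, whereas you pass through $\rH_{j-1}(S)\cong\rH^{k-j}(S^*)$ and apply UCT on the $S^*$ side), but the content is identical.
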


In the proof of this theorem we use a technical lemma from Krushkal and Renardy.

{\bf Lemma} (\cite{KR}).
{\it If $K$ is a cell decomposition of $S^k$ and $S\subseteq K$ is a subcomplex of $K$, then $S$ is homotopy equivalent to $S^k \setminus S^*$, where $S^*$ is a subcomplex of the dual cell decomposition $K^*$ formed by cells which do not intersect $S$.}

\begin{proof} We invoke the universal coefficient theorem, Alexander duality (see \cite{Ha}), and the above Lemma %\ref{le:dcell} 
to get the following isomorphisms.
$$
\rH_j(S)/\tor(\rH_j(S))\oplus\tor(\rH_{j-1}(S)) \cong \rH^j(S)
 \cong \rH_{k-j-1}(S^{k}\setminus S)
 \cong \rH_{k-j-1}(S^*).
$$
Which gives us the following identities.
$$\betti{j}(S)\!=\!\rbetti{j}(S)\!=\!\rbetti{k-j-1}(S^*),\quad
\rbetti{j-1}(S)\!=\!\rbetti{k-j}(S^*)\!=\!\betti{k-j}(S^*), \quad
\tor(\rH_{j-1}(S))\!\cong\!\tor(\rH_{k-j-1}(S^*)).
$$

These together will conclude the proof. 
\begin{align*}
\wt{T}_K^j(X,Y) & =\sum_{S\in \mathcal{S}_j} |\tor(\homo{j-1}{S})|^2 X^{\rbetti{j-1}(S)-\rbetti{j-1}(K)}Y^{\betti{j}(S)}
   \\
& =\sum_{S^*\in \mathcal{S}_{k-j}^*} |\tor(\homo{k-j-1}{S^*})|^2 
    X^{\betti{k-j}(S^*)}Y^{\rbetti{k-j-1}(S^*)-\rbetti{k-j-1}(K)}\\
& =\wt{T}_{K^*}^{k-j}(Y,X)
\end{align*}
We used the equations $\rbetti{j-1}(K)=0$ and $\rbetti{k-j-1}(K)=0$ for $K=S^k$ and $1\leq j\leq k-1$.
\end{proof}

\begin{rem}\rm
Theorem \ref{th-mdual} claims that self-dual cell decompositions of $S^k$ have symmetric Tutte polynomials. Examples of self-dual cell decompositions of $S^k$ can be obtained from the $k$-skeletons of $(k+1)$-dimensional regular self-dual polytopes. In 3 dimensions there is only one example, the tetrahedron. In 4 dimensions, there is the 4-simplex. There is also one that has Schl\"afli symbol $\{3,4,3\}$ with 24 facets associated with the root system of the simple Lie algebra $F_4$, see \cite{Cox}. Additionally, Peter McMullen, in his PhD thesis, found a different self-dual cell decomposition of $S^3$ with 120 facets, see \cite{BCM}.
It would be interesting to find further relations of Theorem \ref{th-mdual} with the paper \cite{Max}.
\end{rem}

\section{The Bott polynomial} \label{s:bott}

In 1952 Raoul Bott introduced two polynomials for CW complexes  \cite{Bo1,Bo2} invariant under subdivisions, called  
{\it combinatorially invariant}. Due to a mistake in computing the second polynomial for the sphere, Bott erroneously claimed that they are independent. In fact, Wang shows in \cite[Proof of Theorem 4.2]{Wa} that they are proportional to each other after a suitable change of variables as well as exhibiting an entire class of invariant polynomials in \cite{Wa}. Thus we essentially have only one Bott polynomial which for a finite $k$ dimensional cell complex $K$ can be defined as
$$R_K(\lambda) := \sum_{S\in\mathcal{S}_k} 
(-1)^{f_k(K)-f_k(S)} \lambda^{\b_k(S)}\ .
$$

If $K$ is an orientable manifold without boundary, then $R_K(\lambda)=\lambda-1$, see \cite{Bo1}. 
Z.~Wang \cite{Wa} observed that for graphs, $k=1$,  the coefficients of the Bott polynomial essentially coincide with the {\it Whitney numbers} \cite{Wh} and, in the case of planar graphs, the Bott polynomial is equal to the  chromatic polynomial of the dual graph. Thus the Bott polynomial of a graph is equal to its {\it flow polynomial} (see its definition in \cite{Bo}). Therefore one may regard the Bott polynomial as a higher dimensional generalization of the flow polynomial of graphs. Different approaches to a higher dimensional flow polynomial were suggested in \cite{BeKe,DKM3,Go}. It would be interesting to find a relation of these approaches to the Bott polynomial.

\begin{theorem}\label{bott}
$$R_K(\lambda) = (-1)^{\b_k(K)} T_K^k(-1,-\lambda)\ .$$
\end{theorem}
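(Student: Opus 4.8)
The plan is to compute $T_K^k(-1,-\lambda)$ directly from the definition and match it term-by-term with the definition of $R_K(\lambda)$. Recall that
$$T^k_K(X,Y)=\sum_{S\in \mathcal{S}_k} X^{\betti{k-1}(S)-\betti{k-1}(K)}Y^{\betti{k}(S)},$$
so substituting $X=-1$, $Y=-\lambda$ gives
$$T^k_K(-1,-\lambda)=\sum_{S\in \mathcal{S}_k} (-1)^{\betti{k-1}(S)-\betti{k-1}(K)}(-\lambda)^{\betti{k}(S)}
=\sum_{S\in \mathcal{S}_k} (-1)^{\betti{k-1}(S)-\betti{k-1}(K)+\betti{k}(S)}\lambda^{\betti{k}(S)}.$$
Comparing with $R_K(\lambda)=\sum_{S\in\mathcal{S}_k}(-1)^{f_k(K)-f_k(S)}\lambda^{\b_k(S)}$, the powers of $\lambda$ already agree summand-by-summand, so it suffices to show that the sign exponents agree modulo $2$ up to the global shift by $\betti{k}(K)$; that is, I must prove
$$\betti{k-1}(S)-\betti{k-1}(K)+\betti{k}(S)\equiv f_k(K)-f_k(S)+\betti{k}(K)\pmod 2$$
for every $S\in\mathcal{S}_k$.

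The key step is an Euler-characteristic computation on the chain complex. First I would observe that since $S\in\mathcal{S}_k$ contains the full $(k-1)$-skeleton $K_{(k-1)}$, the two chain complexes $C_\bullet(S)$ and $C_\bullet(K)$ agree in all degrees below $k$ and differ only in degree $k$, where $C_k(S)$ is the free abelian group on the chosen subset of $k$-cells and $C_k(K)$ is free on all $f_k$ of them. Writing the Euler characteristic of $S$ two ways — as the alternating sum of ranks of chain groups and as the alternating sum of Betti numbers (the torsion contributes nothing to ranks) — and doing the same for $K$, and then subtracting, almost everything cancels: the chain groups in degrees $<k$ cancel exactly, and the Betti numbers $\betti{i}$ for $i<k-1$ cancel as well because $S$ and $K$ have the same $(k-1)$-skeleton hence the same homology in degrees $\le k-2$. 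What remains is a relation of the form
$$f_k(S)-f_k(K)=\bigl(\betti{k}(S)-\betti{k}(K)\bigr)-\bigl(\betti{k-1}(S)-\betti{k-1}(K)\bigr),$$
which rearranges to exactly the congruence above — in fact to an honest equality of integers, not merely mod $2$, which is even cleaner.

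Once that linear relation is in hand the theorem drops out: multiply the displayed identity for $T^k_K(-1,-\lambda)$ by $(-1)^{\betti{k}(K)}$, use the relation to rewrite each sign exponent $\betti{k-1}(S)-\betti{k-1}(K)+\betti{k}(S)+\betti{k}(K)$ as $f_k(K)-f_k(S)$, and recognize the result as $R_K(\lambda)$. I do not anticipate a serious obstacle here; the only point requiring a moment of care is justifying that $\betti{i}(S)=\betti{i}(K)$ for $i\le k-2$ and that the degree-$(k-1)$ and degree-$k$ chain groups are the only ones that move, which follows immediately from $K_{(k-1)}\subseteq S\subseteq K_{(k)}$ together with the fact that homology in a given degree depends only on the chain complex up to one degree higher. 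One should also note the degenerate possibility $f_k(K)=0$, but then $\mathcal{S}_k=\{K_{(k-1)}\}$ and both sides are trivially equal, so the argument goes through unchanged.
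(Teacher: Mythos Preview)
Your proof is correct and follows essentially the same approach as the paper: both establish the integer identity
\[
\beta_{k-1}(S)-\beta_{k-1}(K)=f_k(K)-f_k(S)-\beta_k(K)+\beta_k(S)
\]
via an Euler-characteristic comparison (the paper routes through $\chi(K_{(k-1)})$ while you compare $\chi(S)$ with $\chi(K)$ directly, but the cancellation is the same), and then match monomials term-by-term.
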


\begin{proof} 
%Consider a $k$ dimensional subcomplex $S$.
The Euler characteristics of the $(k-1)$-skeleton $K_{(k-1)}$
(which is contained in both $K$ and $S$) in terms of the numbers of cells gives the equation.
$$\chi(K_{(k-1)}) = 
\chi(K) - (-1)^k f_k(K) = \chi(S) - (-1)^k f_k(S)\ .
$$
The same computation in terms of the Betti numbers gives the following.
$$\chi(K_{(k-1)}) = 
\chi(K) - (-1)^{k-1}\b_{k-1}(K) - (-1)^k\b_k(K) = 
\chi(S) - (-1)^{k-1}\b_{k-1}(S) - (-1)^k\b_k(S)\ .
$$
Subtracting these two equations we get
$$(-1)^{k-1}\b_{k-1}(K) + (-1)^k \b_k(K) - (-1)^k f_k(K) = (-1)^{k-1}\b_{k-1}(S)+ (-1)^k\b_k(S) - (-1)^k f_k(S)\ .
$$
Dividing by $(-1)^{k-1}$ gives the following.
\begin{equation}\label{eq:betti-k-1}
\b_{k-1}(S)-\b_{k-1}(K)  = f_k(K)-f_k(S) - \b_k(K) + \b_k(S)\ .
\end{equation}

Now the monomial on the right hand side of the theorem's identity corresponding to a subcomplex $S$ is
$$(-1)^{\b_k(K)} (-1)^{\b_{k-1}(S) - \b_{k-1}(K)}(-\lambda)^{\b_k(S)} = 
(-1)^{f_k(K)-f_k(S)}\lambda^{\b_k(S)}\ .
$$
This coincides with the corresponding monomial of the left hand side.
\end{proof}

\section{Skein relations for the Tutte-Krushkal-Renardy polynomial}\label{s:c-d-r}

For graphs, the Tutte polynomial is often equivalently defined by a set of contraction-deletion relations, which we call here {\it skein relations} following the knot theoretic terminology. Z.~Wang \cite{Wa} found skein (contraction/deletion) relations for the Bott polynomial. Here we generalize them to the Tutte-Krushkal-Renardy polynomial.

For a $k$-cell $\s$ of $K$, we denote its closure in $K$ and its boundary in $K$ by $\ws$ and $\bs$ respectively. As subcomplexes of $K$, $\ws$ and $\bs$ inherit the CW structures from $K$. The following definitions generalize the standard definitions for graphs. It was motivated by matroid theory, see Remark \ref{rem-mat}.

\begin{definition}\label{def:l-b} \rm 
\begin{itemize}
\item $\s$ is a \emph{loop} in $K$ if $H_k(\ws) \cong  \Z$\ ; 

\item $\s$ is a \emph{bridge} in $K$ if 
$\b_{k-1}(K\setminus\s) = \b_{k-1}(K)+1$\ ;

\item $\s$ is \emph{boundary regular} if  $\rH_{k-1}(\bs)\cong\Z$\ .
\end{itemize}
\end{definition}

%\begin{lemma}\label{le:lb}
%For a top dimensional $k$-cell $\s\subset K$ we have
%$$H_k(\ws)\cong\left\{ \begin{array}{ll}\Z& \mbox{if $\s$ is a loop,}\\
%                                    0& \mbox{otherwise}\ .
%                   \end{array}\right.
%$$
%$$\b_{k-1}(K\setminus\s)=\left\{ \begin{array}{ll}
%             \b_{k-1}(K)+1& \mbox{if $\s$ is a bridge,}\\
%             \b_{k-1}(K)  & \mbox{otherwise}\ .
%                   \end{array}\right.
%$$
%\end{lemma}
%
%\begin{proof}
%The closure $\ws$ is a subcomplex of $K$, so it inherits a CW structure from $K$.
%The $k$-th cellular chain group for $\ws$ is isomorphic to $\Z$ and generated by $\s$.
%If $\partial_k(\s)=0$, then $H_k(\ws)\cong\Z$. 
%If $\partial_k(\s)\not=0$, then $H_k(\ws)=0$.
%
%The cellular chain complexes for $K$ and for $K\setminus\s$ differ in the $k$-th chain groups $C_k(K)$ and $C_k(K\setminus\s)$ which has rank one less.
%Thus the boundary maps $\partial_{j}$ are the same in both complexes for $j\le k-1$.
%Then $\b_{k-1}(K\setminus\s)=
%\rank(\ker(\partial_{k-1}))-\rank(\partial_k(C_k(K\setminus\s)))$.
%The rank of the image $\partial_k(C_k(K\setminus\s))$ differs from the %rank of $\partial_k(C_k(K))$ at most by 1.
%\end{proof}

%\begin{corollary}
\begin{prop}\label{cor:lb-prop}
{\bf (a)} A loop is not a bridge.

{\bf (b)} If $\s$ is a bridge for $K$, then it is a bridge for any spanning subcomplex $S\ni\s$.
\end{prop}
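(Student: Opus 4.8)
The plan is to unwind the definitions in Definition~\ref{def:l-b} and reduce both statements to elementary facts about cellular chain complexes over $\Z$. For part \textbf{(a)}, suppose $\s$ is simultaneously a loop and a bridge. Being a loop means $H_k(\ws)\cong\Z$, i.e. the closed cell $\ws$ (which consists of $\s$ together with its boundary sphere $\bs$ sitting inside $K_{(k-1)}$) carries a nontrivial top cycle; equivalently the cellular boundary $\partial_k\s$, computed in $\ws$, is zero. First I would observe that $\partial_k\s$ as an element of $C_{k-1}(K;\Z)$ is the same chain whether computed in $\ws$ or in $K$, so $\s$ being a loop forces $\partial_k\s=0$ in $C_{k-1}(K;\Z)$. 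Then $\s$ represents a class in $H_k(K_{(k)})$ that is independent of the other $k$-cells, and deleting $\s$ cannot change the rank of $\mathrm{Im}\,\partial_k$; a short rank count (using $\b_{k-1}(K)=f_{k-1}-\mathrm{rank}\,\partial_{k-1}-\mathrm{rank}\,\partial_k$ and that only $\mathrm{rank}\,\partial_k$ can drop when a $k$-cell with zero boundary is removed) shows $\b_{k-1}(K\setminus\s)=\b_{k-1}(K)$, contradicting the bridge condition $\b_{k-1}(K\setminus\s)=\b_{k-1}(K)+1$.

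For part \textbf{(b)}, let $\s$ be a bridge for $K$ and let $S\in\mathcal S_k$ with $\s\in S$; I want $\b_{k-1}(S\setminus\s)=\b_{k-1}(S)+1$. The key point is the rank formula: for any spanning $k$-subcomplex $S'$ one has $\b_{k-1}(S')=f_{k-1}-\mathrm{rank}\,\partial_{k-1}-\mathrm{rank}(\partial_k|_{S'})$, where $\partial_k|_{S'}$ is the boundary map restricted to the $k$-cells of $S'$ (the $(k-1)$-skeleton, hence $f_{k-1}$ and $\partial_{k-1}$, is common to all of them). Thus $\b_{k-1}(S\setminus\s)-\b_{k-1}(S)$ equals $\mathrm{rank}(\partial_k|_S)-\mathrm{rank}(\partial_k|_{S\setminus\s})$, which is $1$ precisely when the column vector $\partial_k\s$ is not in the span of the columns $\{\partial_k\tau:\tau\text{ a }k\text{-cell of }S,\ \tau\neq\s\}$, and $0$ otherwise. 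Since $\s$ is a bridge for $K$, the same statement with $K$ in place of $S$ gives that $\partial_k\s$ is not even in the span of \emph{all} the other $k$-cells of $K$; a fortiori it is not in the span of the smaller set coming from $S$. Hence the difference is exactly $1$, as required. (Strictly, over $\Z$ I would phrase "not in the span" as "the rank over $\Q$ of the matrix with $\partial_k\s$ adjoined exceeds the rank without it," which is what $\b_{k-1}$ sees; the torsion plays no role here.)

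I would organize the write-up as: (i) record the rank formula $\b_{k-1}(S')=f_{k-1}-\mathrm{rank}\,\partial_{k-1}-\mathrm{rank}(\partial_k|_{S'})$ valid for all $S'\in\mathcal S_k$, noting that $f_{k-1},\mathrm{rank}\,\partial_{k-1}$ are the same for $K$ and for every spanning subcomplex; (ii) translate "loop" into $\partial_k\s=0$ and "bridge" into "$\partial_k\s$ raises the column rank of the $\partial_k$-matrix"; (iii) derive (a) and (b) from (ii) by the monotonicity of column span under removing columns.

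The main obstacle I anticipate is purely bookkeeping: being careful that "$\b_{k-1}(K\setminus\s)$" refers to the subcomplex of $K$ obtained by deleting the open cell $\s$ (so its $(k-1)$-skeleton is unchanged, which is what makes the common-$f_{k-1}$, common-$\partial_{k-1}$ argument go through), and checking that a loop cell really does contribute a zero column to $\partial_k$ when that column is computed inside $K$ rather than just inside $\ws$ --- this uses that $\bs\subseteq K_{(k-1)}$ so no new $(k-1)$-cells appear and the cellular boundary is local. Once those points are pinned down, both parts are one-line rank comparisons.
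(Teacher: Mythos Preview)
Your proposal is correct and follows essentially the same approach as the paper: both arguments translate ``loop'' into $\partial_k\s=0$ and ``bridge'' into ``$\partial_k\s$ is linearly independent of the other $k$-cell boundaries,'' then read off (a) and (b) from the rank identity $\b_{k-1}(S')=\rank(\ker\partial_{k-1})-\rank(\partial_k|_{S'})$ together with the monotonicity of column span. The only cosmetic differences are that the paper argues (a) directly rather than by contradiction and writes the rank formula as $\rank(\ker\partial_{k-1})-\rank(\partial_k(C_k(\cdot)))$ instead of expanding $\rank(\ker\partial_{k-1})=f_{k-1}-\rank\partial_{k-1}$.
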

%\end{corollary}

\begin{proof}
Let $\s$ be a loop. Then $C_k(\ws)=\Z$ is generated by $\s$, so if $H_k(\ws)=\Z$, $\partial_k(\s)=0$. So $\partial_k(C_k(K\setminus\s))= \partial_k(C_k(K))$.
Thus, $\b_{k-1}(K\setminus\s)= \rank(\ker(\partial_{k-1}))-\rank(\partial_k(C_k(K\setminus\s))) = \rank(\ker(\partial_{k-1}))-\rank(\partial_k(C_k(K)))=\b_{k-1}(K)$ and $\s$ is not a bridge. 

If $\s$ is a bridge for $K$, then 
$\rank(\partial_k(C_k(K\setminus\s)))= \rank(\partial_k(C_k(K)))- 1$.
This means that $\partial_k(\s)$ is independent from the images of all other $k$-cells of $K$. In particular, it is independent from the images of the $k$-cells of $S$ different from $\s$. Consequently, 
$\rank(\partial_k(C_k(S\setminus\s)))= \rank(\partial_k(C_k(S)))- 1$, which means that $\s$ is a bridge for $S$.
\end{proof}

\begin{exa}\label{ex:contact-loop}
\rm  In graphs, every loop has a single vertex as a boundary and so they are not boundary regular. When we consider cell complexes other than graphs, loops can suddenly be

\noindent
\parbox[t]{3.8in}{ boundary regular. Let $K$ be a 2-sphere with two points identified to a single point $p$. It has a CW structure consisting of one 2-cell $\s$, one 1-cell (edge) $e$, and one 0-cell $p$. The closure $\ws$ coincides with the whole complex $K$ which has a homotopy type of the wedge $S^2\vee S^1$. All its homology groups are isomorphic, 
$H_2(K)\cong H_1(K)\cong H_0(K)\cong\Z$. Thus $\s$ is a loop. 
On the\linebreak}\qquad\qquad
\risS{-70}{s2vs1}{\put(18,52){$\s$}\put(34,30){$e$}\put(81,35){$p$}
                 }{80}{0}{0} \vspace{-8pt}\\
other  hand,
$\bs=e\cup p= S^1$. So $\rH_1(\bs)\cong\Z$, and $\s$ is boundary regular.
 \end{exa} 

For the following theorem we use standard tools from algebraic topology such as the long exact sequence of a pair and the fact that a CW complex $X$ and its subcomplex $A$ form a ``good pair", so $H_i(X,A)\cong\rH_i(X/A)$. 
We refer to \cite{Ha} for all of these facts. In particular, for a $k$-cell $\s$ we can consider the quotient space $K/\ws$ as a CW complex obtained from $K$ by collapsing all cells in $\ws$ to a point.

\begin{theorem}\label{th:skein}
The Tutte-Krushkal-Renardy polynomial satisfies the following relations:

{\bf (i)} If $\s$ is neither a bridge nor a loop and is boundary regular, then 
$$T^k_K(X,Y)= T^k_{K/\ws}(X,Y) + T^k_{K\setminus\s}(X,Y)\ .$$

{\bf (ii)} If $\s$ is a loop, then 
$$T^k_K(X,Y) = (Y+1) T^k_{K\setminus\s}(X,Y)\ .$$

{\bf (iii)} If $\s$ is a bridge and boundary regular, then %removed some text
$$T^k_K(X,Y) = (X+1) T^k_{K/\ws}(X,Y)\ .$$
\end{theorem}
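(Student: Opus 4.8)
The plan is to analyze the TKR polynomial by splitting the sum over spanning subcomplexes $S \in \mathcal{S}_k$ according to whether or not $S$ contains the cell $\s$. Write $T^k_K(X,Y) = A + B$ where $A = \sum_{S \not\ni \s} X^{\b_{k-1}(S)-\b_{k-1}(K)} Y^{\b_k(S)}$ collects the subcomplexes avoiding $\s$ and $B = \sum_{S \ni \s} (\cdots)$ collects those containing it. The subcomplexes avoiding $\s$ are exactly the spanning $k$-subcomplexes of $K \setminus \s$, but the exponent normalization uses $\b_{k-1}(K)$ rather than $\b_{k-1}(K \setminus \s)$; in case (i), since $\s$ is not a bridge, $\b_{k-1}(K \setminus \s) = \b_{k-1}(K)$, so $A = T^k_{K \setminus \s}(X,Y)$ on the nose. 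For the $\s$-containing sum $B$, the idea is to identify subcomplexes $S \ni \s$ of $K$ with subcomplexes of $K/\ws$ via the quotient map: collapsing $\ws$ sends the $k$-cell $\s$ away and leaves the other $k$-cells in bijection, so $S \mapsto S/\ws$ is a bijection between $\{S \in \mathcal{S}_k(K): \s \in S\}$ and $\mathcal{S}_k(K/\ws)$. The heart of the argument is then to compare Betti numbers across this bijection.

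The first key step is therefore a homological lemma: for $S \ni \s$, relate $\b_{k-1}(S)$ and $\b_k(S)$ to $\b_{k-1}(S/\ws)$ and $\b_k(S/\ws)$. I would use the long exact sequence of the good pair $(S, \ws)$ together with $H_i(S, \ws) \cong \rH_i(S/\ws)$. Since $\ws$ is a $k$-cell with boundary $\bs$, its reduced homology is concentrated in degrees $k$ and $k-1$: boundary regularity ($\rH_{k-1}(\bs) \cong \Z$) forces $H_k(\ws) = 0$ when $\s$ is not a loop (this is essentially where "not a loop" enters), and $\rH_{k-1}(\ws)$ is controlled too. Feeding these into the long exact sequence
$$\cdots \to \rH_k(\ws) \to \rH_k(S) \to \rH_k(S/\ws) \to \rH_{k-1}(\ws) \to \rH_{k-1}(S) \to \rH_{k-1}(S/\ws) \to \cdots$$
should yield, after taking ranks, identities of the shape $\b_k(S) = \b_k(S/\ws)$ and $\b_{k-1}(S) = \b_{k-1}(S/\ws) + c$ for a constant $c$ independent of $S$ (depending only on the local homology of $\ws$ relative to $\bs$). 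I expect $c = 0$ in case (i) and $c = -1$ in case (iii), so that the exponent shift produces exactly a factor of $X$ relative to the correct normalization $\b_{k-1}(K/\ws)$. One must check $\b_{k-1}(K) $ versus $\b_{k-1}(K/\ws)$ is consistent with this $c$ via the same long exact sequence applied to $(K,\ws)$; in case (i) non-bridge-ness makes both normalizations agree, and in case (iii) bridge-ness is exactly what shifts $\b_{k-1}(K/\ws) = \b_{k-1}(K) - 1$ (using Proposition~\ref{cor:lb-prop}(b) to transfer bridgeness to each $S \ni \s$).

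With the homological lemma in hand, the three cases fall out as bookkeeping. For (i): $A = T^k_{K \setminus \s}$ as above, and $B = \sum_{S' \in \mathcal{S}_k(K/\ws)} X^{\b_{k-1}(S')-\b_{k-1}(K/\ws)} Y^{\b_k(S')} = T^k_{K/\ws}(X,Y)$, giving the sum. For (ii), $\s$ a loop means $\partial_k \s = 0$, so adding or removing $\s$ from any $S$ simply shifts $\b_k$ by $1$ and leaves $\b_{k-1}$ and the normalization unchanged; pairing each $S \not\ni \s$ with $S \cup \s$ gives the factor $(1 + Y)$, i.e.\ $T^k_K = (Y+1)T^k_{K \setminus \s}$. (Here I would note a loop need not be collapsible nicely, which is why the relation is a multiplicative one rather than contraction-deletion.) For (iii), $\s$ a bridge forces $A$ to vanish: if $S \not\ni \s$ then $\b_{k-1}(S) \geq \b_{k-1}(K \setminus \s) = \b_{k-1}(K)+1 > \b_{k-1}(K)$ — wait, more carefully, one shows the set of $S \not\ni \s$ contributing is empty because the exponent of $X$ must be $\geq 1$... actually the cleaner route: every $S \not\ni\s$ is a subcomplex of $K\setminus\s$, and $\b_{k-1}$ of any such $S$ exceeds $\b_{k-1}(K)$ — this needs the bridge condition to propagate downward, which is not immediate, so instead I would directly pair $S \ni \s$ with $S \setminus \s$ and show the former contributes $X$ times the latter's normalized monomial while both map to the same element of $\mathcal{S}_k(K/\ws)$ after accounting for $c = -1$; reorganizing gives $(X+1)T^k_{K/\ws}$. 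The main obstacle I anticipate is getting the constant $c$ and the normalization shift $\b_{k-1}(K) - \b_{k-1}(K/\ws)$ exactly right and consistent across cases — in particular making precise how boundary regularity pins down $\rH_{k-1}(\ws)$ and $\rH_k(\ws)$ in the long exact sequence, and confirming that "bridge" is precisely the condition equating $\b_{k-1}(K/\ws)$ with $\b_{k-1}(K)-1$. Once that local computation is nailed, the rest is a short manipulation of the defining sum.
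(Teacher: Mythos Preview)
Your plan is essentially the paper's own: split $\mathcal S_k$ by $\s\in S$ or not, identify the $\s\not\in S$ sum with $T^k_{K\setminus\s}$ (using non-bridge to match normalizations), identify the $\s\in S$ sum with $T^k_{K/\ws}$ via a homological lemma, and handle (ii) by the trivial observation $\partial_k\s=0$. Case (iii) is exactly as you describe after your correction: the lemma gives $B=T^k_{K/\ws}$, and pairing $S\not\ni\s$ with $S\cup\s$ plus Proposition~\ref{cor:lb-prop}(b) gives $A=X\,T^k_{K/\ws}$.

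One point to straighten out: your expectation that the constant $c=\b_{k-1}(S)-\b_{k-1}(S/\ws)$ equals $0$ in case (i) and $-1$ in case (iii) is not how it works. The value of $c$ has nothing to do with bridge-ness; it depends only on the lower homology of $\ws$ (which you have no control over beyond $\b_{k-1}(\ws)=0$). What you actually need, and what the lemma gives, is that $c$ is \emph{independent of $S$}---then taking $S=K$ shows $c=\b_{k-1}(K)-\b_{k-1}(K/\ws)$, so the \emph{normalized} exponent $\b_{k-1}(S)-\b_{k-1}(K)$ equals $\b_{k-1}(S/\ws)-\b_{k-1}(K/\ws)$ automatically, in both (i) and (iii). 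The bridge/non-bridge distinction only enters when you handle the $S\not\ni\s$ sum.

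A minor technical difference: you propose extracting the $\b_{k-1}$ identity directly from the long exact sequence of $(S,\ws)$. This works (after tensoring with $\R$, since $H_{k-1}(\ws;\R)=0$, and noting that the tail $H_{k-2}(\ws;\R)\to H_{k-2}(S;\R)$ is independent of $S$ because all $S$ share the $(k-1)$-skeleton), but the paper takes a slicker route: once $\b_k(S)=\b_k(S/\ws)$ is established, the $\b_{k-1}$ identity follows immediately from the Euler-characteristic relation \eqref{eq:betti-k-1}, avoiding any discussion of lower-degree terms.
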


\bigskip
We will use the following lemma about contraction.

\begin{lemma} For $S\ni\sigma$, if $\sigma$ is boundary regular and not a loop, then $\b_k(S)=\b_k(S/\ws)$ and $\b_{k-1}(S)-\b_{k-1}(K)=\b_{k-1}(S/\ws)-\b_{k-1}(K/\ws)$. \end{lemma}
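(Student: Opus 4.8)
The plan is to analyze the effect of contracting $\sigma$ (i.e. collapsing $\ws$ to a point) on chain complexes and boundary maps, working over $\mathbb{Q}$ (or $\mathbb{R}$) so that Betti numbers are just ranks of rational chain groups and kernels/images of rational boundary maps. First I would set up notation: for a spanning subcomplex $S\ni\sigma$ of $K$, the quotient $S/\ws$ has the same $k$-cells as $S$ except $\sigma$ is removed, the same $(k-1)$-cells as $K_{(k-1)}$ minus those of $\bs$ collapsed, and so on. The key algebraic input is that $\sigma$ is \emph{boundary regular}, meaning $\rH_{k-1}(\bs)\cong\Z$, so $\bs$ behaves homologically like a $(k-1)$-sphere; combined with $\sigma$ \emph{not} being a loop, which means $\partial_k(\sigma)\neq 0$ in $C_{k-1}(\ws)$.

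The first claim, $\b_k(S)=\b_k(S/\ws)$, I would prove by comparing $\ker\partial_k$ for $S$ and for $S/\ws$. Since $S$ has dimension $\leq k$, $\b_k(S)=\dim_\mathbb{Q}\ker(\partial_k\colon C_k(S)\to C_{k-1}(S))$. Passing to $S/\ws$ removes the generator $\sigma$ from $C_k$ and modifies the target $C_{k-1}$ by the collapse; the crucial point is that because $\sigma$ is not a loop, $\partial_k^S(\sigma)\neq 0$, so $\sigma$ contributes nothing to $\ker\partial_k^S$, and the collapse of $\bs$ (which, by boundary regularity, has $\rH_{k-1}(\bs)=\Z$, hence $\partial_{k-1}$ restricted to chains supported on $\bs$ has corank exactly $1$ — precisely the class hit by $\partial_k(\sigma)$) does not create any new $k$-cycles among the remaining $k$-cells. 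So the kernel is unchanged in dimension. I would make this precise via the long exact sequence of the pair $(S,\ws)$, using $H_i(S,\ws)\cong\rH_i(S/\ws)$: since $\ws$ is a loop-free boundary-regular $k$-cell, $\rH_k(\ws)=0$ and $\rH_{k-1}(\ws)=0$ (the cell $\sigma$ fills in $\bs\simeq S^{k-1}$), so $\ws$ is $\mathbb{Q}$-acyclic through dimension $k$, giving $\rH_k(S)\cong\rH_k(S/\ws)$ directly.

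For the second identity, $\b_{k-1}(S)-\b_{k-1}(K)=\b_{k-1}(S/\ws)-\b_{k-1}(K/\ws)$, I would again use the $\mathbb{Q}$-acyclicity of $\ws$: the long exact sequences of $(S,\ws)$ and $(K,\ws)$ give $\rH_{k-1}(S)\cong\rH_{k-1}(S/\ws)$ and $\rH_{k-1}(K)\cong\rH_{k-1}(K/\ws)$, so in fact each difference's two terms match separately, and subtracting gives the result. The only subtlety is the $\ws$-acyclicity claim itself, which is where boundary regularity and non-loopness are both essential — this is the main obstacle. I would establish it from the cellular chain complex of $\ws$: it has one $k$-cell $\sigma$ and the cells of $\bs$; $\rH_k(\ws)=\ker\partial_k=0$ because $\sigma$ is not a loop; and $\rH_{k-1}(\ws)=\ker\partial_{k-1}/\im\partial_k$, where $\ker\partial_{k-1}$ (computed within $\bs$) is $1$-dimensional by boundary regularity ($\rH_{k-1}(\bs)\cong\Z$ and $\partial_{k-1}$ on $\bs$ has the same kernel as on $\ws$), and that generator is killed by $\partial_k(\sigma)$ since $\partial_k(\sigma)\neq 0$ and lies in $\ker\partial_{k-1}$ (as $\partial_{k-1}\partial_k=0$). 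Hence $\rH_{k-1}(\ws)=0$ and $\ws$ is $\mathbb{Q}$-acyclic in dimensions $k$ and $k-1$, completing the argument. A final remark: one must double-check that collapsing $\ws$ inside $K$ versus inside $S$ is compatible — i.e. $S/\ws$ is naturally a spanning subcomplex of $K/\ws$ — but this is immediate from the cell correspondence.
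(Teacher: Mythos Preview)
Your treatment of the first identity $\b_k(S)=\b_k(S/\ws)$ is correct and is essentially the paper's argument: you establish $\rH_k(\ws;\mathbb{Q})=0$ (not a loop) and $\rH_{k-1}(\ws;\mathbb{Q})=0$ (boundary regularity plus non-loop), and then the long exact sequence of $(S,\ws)$ gives the isomorphism in degree $k$. The paper derives $\b_{k-1}(\ws)=0$ via the long exact sequence of $(\ws,\bs)$ rather than your direct chain-level computation, but the content is the same.

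The second identity, however, has a genuine gap. You assert that the long exact sequences of $(S,\ws)$ and $(K,\ws)$ give $\rH_{k-1}(S;\mathbb{Q})\cong\rH_{k-1}(S/\ws;\mathbb{Q})$ and $\rH_{k-1}(K;\mathbb{Q})\cong\rH_{k-1}(K/\ws;\mathbb{Q})$ separately. But the sequence reads
\[
0\longrightarrow \rH_{k-1}(S;\mathbb{Q})\longrightarrow \rH_{k-1}(S/\ws;\mathbb{Q})\longrightarrow \rH_{k-2}(\ws;\mathbb{Q})\longrightarrow \rH_{k-2}(S;\mathbb{Q}),
\]
and you have only shown $\ws$ is $\mathbb{Q}$-acyclic in degrees $k$ and $k-1$; boundary regularity says nothing about $\rH_{k-2}(\bs)$ and hence nothing about $\rH_{k-2}(\ws)$. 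For $k\geq 3$ one can have $\rH_{k-2}(\ws;\mathbb{Q})\neq 0$ (e.g.\ a $3$-cell whose attaching map has image a $2$-sphere with two points identified, so $\bs\simeq S^2\vee S^1$ and $\rH_1(\ws;\mathbb{Q})\cong\mathbb{Q}$), and if that $(k-2)$-class bounds in $K_{(k-1)}$ then $\b_{k-1}(K/\ws)>\b_{k-1}(K)$. So the individual equalities can fail.

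Your approach is easily repaired: since $S$ and $K$ share the same $(k-1)$-skeleton, $\rH_{k-2}(S;\mathbb{Q})=\rH_{k-2}(K;\mathbb{Q})$ and the inclusion-induced maps from $\rH_{k-2}(\ws;\mathbb{Q})$ coincide; hence the ``defect'' $\dim\ker\bigl(\rH_{k-2}(\ws;\mathbb{Q})\to\rH_{k-2}(\,\cdot\,;\mathbb{Q})\bigr)$ is the same for $S$ and $K$ and cancels in the difference. The paper sidesteps this entirely by a cleaner route: having proved $\b_k(S)=\b_k(S/\ws)$, it invokes the Euler-characteristic identity $\b_{k-1}(S)-\b_{k-1}(K)=f_k(K)-f_k(S)-\b_k(K)+\b_k(S)$ (equation~\eqref{eq:betti-k-1}), whose right-hand side is manifestly unchanged under simultaneous contraction of $\ws$ in $S$ and $K$.
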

\begin{proof}

First consider the case $k=1$. Then $K$ is a graph, and the claim is that the contraction of an edge, $e$, that is not a loop, i.e. has distinct vertices, preserves the number of circuits of $K$ and the number of connected components for every subgraph. Indeed, any circuit containing $e$ has other edges, since it is not a loop, so contracting $e$ will only reduce the size of the circuit. And contraction clearly cannot separate any subgraph. 

Now, we assume that $k>1$. Since $\s$ is not a loop, $H_k(\ws)=0$. Since $\s$ is boundary regular,  $H_{k-1}(\bs)=\rH_{k-1}(\bs)\cong\Z$. These two conditions are equivalent to the condition $(C)$ of \cite[Theorem 4.1]{Wa}. 

Consider the long exact sequence of a pair $(\ws,\bs)$
$$H_k(\bs) \longrightarrow H_k(\ws)\longrightarrow 
  \rH_k(\ws/\bs)\longrightarrow H_{k-1}(\bs) \longrightarrow 
  H_{k-1}(\ws) \longrightarrow \rH_{k-1}(\ws/\bs)\ .
$$
We have $H_k(\bs)=0$, $H_k(\ws)=0$,  $H_{k-1}(\bs)\cong\Z$, $\rH_k(\ws/\bs)\cong\Z$, and $\rH_{k-1}(\ws/\bs)=0$ because $\ws/\bs$ is a $k$-sphere. So the sequence becomes
$$0\longrightarrow 0\longrightarrow \Z\longrightarrow \Z\longrightarrow 
  H_{k-1}(\ws) \longrightarrow 0\ .
$$
The exactness of this sequence implies that $H_{k-1}(\ws)$ is a finite group. Thus $\b_{k-1}(\ws)=0$.

Now for a subcomplex $S$, consider the long exact sequence of a pair $(S,\ws)$
$$H_k(\ws) \longrightarrow H_k(S)\longrightarrow 
  H_k(S/\ws)\longrightarrow H_{k-1}(\ws) 
%  \longrightarrow 
%  H_{k-1}(S) \longrightarrow \rH_{k-1}(S/\ws)\longrightarrow H_{k-2}(\ws)\ .
$$
Tensoring it by the field of real numbers $\R$ we get
$$0 \longrightarrow H_k(S;\R)\longrightarrow 
  H_k(S/\ws;\R)\longrightarrow 0\ . 
$$
Which means that $\b_k(S)=\b_k(S/\ws)$ for all $S\ni\s$, and in particular for
$S=K$. Then, using the equation \eqref{eq:betti-k-1} from the proof of theorem \ref{bott}, we have
\begin{align*}
\b_{k-1}(S)-\b_{k-1}(K) &= f_k(K)-f_k(S) - \b_k(K) + \b_k(S) \\
&= f_k(K/\ws)-f_k(S/\ws) - \b_k(K/\ws) + \b_k(S/\ws)
= \b_{k-1}(S/\ws)-\b_{k-1}(K/\ws)\ .
\end{align*}
\end{proof}
\bigskip
\begin{proof}[Proof of Theorem \ref{th:skein}] 
To prove the theorem we partition the set of all top dimensional subcomplexes $S$ according to the property $S\ni\s$ or $S\not\ni\s$.

\bigskip
In case (i), the sum over all $S\not\ni\s$ gives $T^k_{K\setminus\s}(X,Y)$ because 
$\b_{k-1}(K\setminus\s)=\b_{k-1}(K)$ since $\s$ is not a bridge. The lemma above implies that the sum over all $S\ni\s$ is equal to $T^k_{K/\ws}(X,Y)$.  This proves part (i) of the theorem.

\bigskip
In case (ii), the sum over all $S\not\ni\s$ is again $T^k_{K\setminus\s}(X,Y)$ for the same reason,
i.e. a loop $\s$ is not a bridge according to
% Corollary 
Proposition \ref{cor:lb-prop}. 
%and Lemma \ref{le:lb}.

For $S\ni\s$ we have $\partial_k(\s)=0$. 
%by the proof of Lemma \ref{le:lb}. 
So $\partial_k(C_k(S\setminus\s))= \partial_k(C_k(S))$. Therefore the chain complex for $S$ is isomorphic to the direct sum of the chain complex for 
$S\setminus\s$ and the chain complex 
$0 \longrightarrow \Z\longrightarrow 0$ with $\Z$ at the grading $k$.
Thus we get
$\b_{k-1}(S)=\b_{k-1}(S\setminus\s)$ and $\b_k(S)=\b_k(S\setminus\s)+1$.
Consequently the sum over $S\ni\s$ is equal to $YT^k_{K\setminus\s}(X,Y)$ which proves part (ii).

\bigskip
For case (iii), the lemma gives us 
that the sum over all $S\ni\s$ is equal to $T^k_{K/\ws}(X,Y)$.
The subcomplexes $S\not\ni\s$ are in 1-to-1 correspondence with the subcomplexes $(S\cup\s)\ni\s$. We prove that under this correspondence the sum over all $S\not\ni\s$ is equal to $X$ times the sum all $(S\cup\s)\ni\s$ which is $T^k_{K/\ws}(X,Y)$.

According to %Corollary 
Proposition \ref{cor:lb-prop}, if $\s$ is a bridge for $K$ it is a bridge for any subcomplex containing $\s$, in particular for $S\cup\s$. Then 
%Lemma \ref{le:lb} gives 
$\b_{k-1}(S)=\b_{k-1}(S\cup\s)+1$ for any subcomplex $S\not\ni\s$. This gives an extra $X$ in the sum over all such $S$ compared to the sum over $S\cup\s$. To compare the exponents of $Y$ consider the cell chain complexes of $S$ and $S\cup\s$:
$$\xymatrixcolsep{0pc}\xymatrixrowsep{-2pt}\xymatrix{
0\longrightarrow &C_k(S)\ar[drrrrr]^-{\partial_k}\ar@{^{(}->}[dd] \\
&&&&&&C_{k-1}(K)\longrightarrow \dots \\
0\longrightarrow &C_k(S\cup\s)\hspace{-18pt}\ar@<1ex>[urrrrr]^-{\partial_k}}
$$
Since $\s$ is a bridge of $S$, its image $\partial_k(\s)$ is independent from
the images of all other $k$-cells of $S\cup\s$. Therefore any element of $\ker(\partial_k|_{C_k(S\cup\s)})$ actually belongs to $C_k(S)$. This means that 
$H_k(S\cup\s)=\ker(\partial_k|_{C_k(S\cup\s)}) = 
\ker(\partial_k|_{C_k(S)}) = H_k(S)$.
Consequently, $\b_k(S\cup\s)=\b_k(S)$ which proves case (iii) of the theorem.
\end{proof}

\section{Examples and concluding remarks.} \label{s:remarks}

\begin{exa}\rm  
The CW complex $K$ is obtained by a modification of Example \ref{ex:contact-loop} gluing an 

\noindent
\parbox[t]{3.8in}{additional 2-cell $\s'$ along the edge $e$.
It is homotopy equivalent to a sphere $S^2$. So its homology groups are
$H_1(K)=0$ and $H_2(K)\cong H_0(K)\cong\Z$. 
The cell $\s$ is still a boundary regular loop.
The new cell $\s'$ is a bridge since $\b_1(K\setminus\s) = 1$. And it is also boundary regular. The skein relation (iii) of Theorem \ref{th:skein}
gives\\
\hspace*{2cm} $T^2_K(X,Y) = (X+1) T^2_{K/\ws'}(X,Y)\ .$
}\qquad\qquad
\risS{-70}{s2vs2}{\put(18,52){$\s$}\put(25,25){$e$}\put(81,35){$p$}
                  \put(48,45){$\s'$}
                 }{80}{0}{64}\\
 
The complex $K/\ws'$ consists of one 2-cell $\s$ and one 0-cell $p$. It is homeomorphic to a sphere $S^2$. By definition its Tutte-Krushkal-Renardy polynomial is equal to $T^2_{K/\ws'}(X,Y)=Y+1$, which is also in agreement with the formula \eqref{eq:TKR-mfld}. Thus we have
$$T^2_K(X,Y) = (X+1)(Y+1) = XY+X+Y+1\ .$$

Alternatively, by the skein relation (ii), we may delete the loop $\sigma$ from $K$ giving 
$$ T^2_K(X,Y)=(Y+1)T^2_{K\setminus\sigma}(X,Y).$$
The complex $K\setminus\sigma$ consists of one 2-cell $\s'$, one 1-cell $e$, and one 0-cell $p$. It is homeomorphic to a disc $D^2$. By definition, $T^2_{K\setminus \s}(X,Y)=X+1$. Thus again we have that
$$T^2_K(X,Y)=(Y+1)(X+1)=YX+Y+X+1\ .$$
\end{exa} 

\begin{exa}\label{ex:s2-3}\rm  
Just as every loop in a graph is not boundary regular, every non-loop in a graph \emph{is} boundary regular, i.e. they have two distinct vertices. The following example shows that this is no longer the case for cell complexes other than graphs. Consider the following cell structure $K$ on a 2-sphere represented as a plane together with a point at infinity. It has three 2-cells $\s_1,$

\noindent
\parbox[t]{3.8in}{ $\s_2,\s_\infty$, three 1-cells $a,b,c$, and two vertices (0-cells) $p,q$. Note that $\s_\infty$ is not boundary regular since its boundary $\bs$ coincides with 1-skeleton of $K$ and has a homotopy type of a wedge of two circles. Thus $H_1(\bs)\cong\Z^2$. It is also neither a bridge nor a loop. The next table shows the contribution of the various subcomplexes $S$ into the Tutte-Krushkal-Renardy polynomial $T^2_K(X,Y)$. 
}\qquad\quad
\risS{-65}{s2-3}{\put(45,55){$\s_\infty$}\put(16,30){$\s_1$}
  \put(72,30){$\s_2$}\put(37,25){$p$}\put(56,25){$q$}
  \put(20,50){$a$}\put(48,35){$b$}\put(80,49){$c$}
                 }{100}{0}{75}

$$\begin{array}{|c||c|c|c|c| c|c|c|c|}\hline
S&\varnothing&\{\s_1\}&\{\s_2\}&\{\s_1,\s_2\} \rb{-5pt}{\makebox(0,15){}} 
    &\{\s_\infty\}&\{\s_1,\s_\infty\}&\{\s_2,\s_\infty\}
    &\{\s_1,\s_2,\s_\infty\} \\ \hline
&X^2&X&X&1 \rb{-5pt}{\makebox(0,15){}} &X&1&1&Y\\ \hline
\end{array}
$$

\medskip\noindent
Therefore, $T^2_K(X,Y)=X^2+3X+3+Y$ which agrees with the equation \eqref{eq:TKR-mfld} since $K=S^2$ is a manifold. The deletion
$K\setminus\s_\infty$ consists of two discs connected by a segment $b$.
Its Tutte-Krushkal-Renardy polynomial is equal to $T^2_{K\setminus\s_\infty}(X,Y)=X^2+2X+1$. The contraction $K/\ws_\infty$ is a wedge of two spheres, so 
$T^2_{K/\ws_\infty}(X,Y)=1+2Y+Y^2$. So the boundary regular condition is essential for the skein relation. On the other hand, $\s_1$ and $\s_2$ satisfy the conditions of case (i) of Theorem \ref{th:skein}, and one may check that 
$T^2_K(X,Y)= T^2_{K/\ws_i}(X,Y) + T^2_{K\setminus\s_i}(X,Y)$ for $i=1,2$.
\end{exa} 

\begin{exa}\rm  
Let $K=\R P^2$ with the standard CW structure: one 2-cell $\s$, one 1-cell $e$, and one 0-cell $p$. In this case $\s$ is a bridge since its deletion gives the circle $\R P^1$. It is also boundary regular because $\bs$ is the same circle. We have $T^2_K(X,Y)=X+1$, while $T^2_{K/\ws}(X,Y)=1$ since $K/\ws$ is a point. And so $T^2_K(X,Y)=(X+1)T^2_{K/\ws}(X,Y)=(X+1).$ Note that this coincides with equation (\ref{eq:TKR-mfld}) with $k=2$, since $f_2=1$. 
\end{exa} 
 
\begin{rem}\rm
In \cite[Theorem 4.1]{Wa} the contraction/deletion relation for the Bott polynomial $R_K(\lambda)$ was obtained. However the skein relations (ii) and (iii) of Theorem \ref{th:skein} were not there.
We would like to mention here that analogues of (ii) and (iii) for $R_K(\lambda)$ follows from Theorem \ref{bott}.
\end{rem}

\begin{rem}\rm
The action of contracting a bridge is not the only operation that one might consider for the skein relation (iii). The importance of contraction is that it preserves the top homology group of a space, but the caveat is that it might greatly affect other homology groups. Indeed, this is why we must require a bridge to be {\it boundary regular}, and by that we really mean that it does not affect the codimension one homology group too radically. In many instances a bridge will be a cell with a ``free face", i.e. an independent vector of the boundary. In such an instance, there is an operation called {\it collapsing} in which the cell and its free face are deleted. Collapsing, unlike contraction, is (when permissible) a homotopy equivalence and therefore affects no homology group. This is stronger than what is required for part (iii) of the theorem to hold, so whenever a bridge has a free face one may collapse rather than contract. Upon collapsing one will have a space homotopic to the original, and therefore it is more desirable than contraction. For cases where the bridge is not boundary regular but still has a free face collapsing will still work for the theorem. 

For example, consider the CW complex $K^\prime=K\setminus\{\sigma_1,\sigma_2\}$ for $K$ from Example  \ref{ex:s2-3}. The 2-cell 

\noindent
\parbox[t]{3.8in}{$\sigma_\infty$ is a bridge that is not boundary regular. As mentioned, the skein relation (iii) fails for $\sigma_\infty$ under contraction, but if we apply (iii) with collapsing (say with respect to face $c$), the resulting complex  is just $\overline{a\cup b}$. We have that $T^2_{K^\prime}(X,Y)=(X+1)T^2_{\overline{a\cup b}}(X,Y)$. Now since $\overline{a\cup b}$ has no 2-cells, there is one summand in $T^2_{\overline{a\cup b}}(X,Y)$, which is necessarily $1$. Thus $T^2_{K^\prime}(X,Y)=X+1$, which coincides with equation 
\eqref{eq:TKR-mfld} for manifolds with boundary.
}\qquad\qquad
\risS{-65}{s2-3a}{\put(45,55){$\s_\infty$}\put(37,25){$p$}
  \put(56,25){$q$}\put(20,50){$a$}\put(48,35){$b$}\put(80,49){$c$}
                 }{100}{0}{75}\\ 
\end{rem}

\begin{rem}\label{rem-topdim}\rm
Theorem \ref{th:skein} gives the skein relation for the top dimensional, $k$-th, Tutte-Krushkal-Renardy polynomial. However, a lower dimensional polynomial 
$T^j_K(X,Y)$ is proportional to $T^j_{K_{(j)}}(X,Y)$, so essentially it depends only on the $j$-th skeleton of $K$. Thus we have a skein relation for them as well. However one has to be careful with the deletion of a $j$-cell $\s$ for $j<k$: the resulting topological space $K\setminus\s$ might not be a cell complex anymore.
\end{rem}

\begin{rem}\label{rem-mat}\rm
As indicated in \cite{KR}, the Tutte-Krushkal-Renardy polynomial is the Tutte polynomial of a matroid $\cM$ obtained in the following way.  Consider the $j$-th chain group of $K$ with real coefficients, $C_j(K;\R)$.
As a vector space over $\R$ it has a distinguished basis formed by the $j$-cells $\s_i$. Consider the matrix of $\partial_j:C_j(K;\R)\longrightarrow C_{j-1}(K;\R)$ with columns indexed by this basis of $C_j(K;\R)$. If $\cM$ is the column matroid of $\partial_j$, then Krushkal and Renardy \cite{KR} showed that $T^j_K(X,Y)=T_\cM(X+1,Y+1)$. 
%changed it to the language of matrices since this seems more natural then talking about the images as vectors. 
%Let us consider the $(j-1)$-st chain group $C_{j-1}(K;\R)$ and the images of $\s_i$'s under the boundary map $\partial_j(\s_i)$. The matroid $\cM$ is a {\it vectorial matroid} of linear dependences of vectors $\partial_j(\s_i)\in C_{j-1}(K;\R)$. 
On matroid theory, we refer to two excellent books \cite{Ox,Wel} and a pioneering paper \cite{Wh2}. 

Our definitions \ref{def:l-b} of a loop and of a bridge are designed in such a way that the corresponding element of the matroid $\cM$ will be a loop or a bridge respectively. Moreover, if a cell $\s$ is boundary regular, then the  matroid of the chain complex of $K/\ws$ is the matroid obtained from $\cM$ by the matroid theoretic contraction of the corresponding element. It was Wang's proof \cite[Lemma 4.1]{Wa} that demonstrated the necessity of the \emph{boundary regularity} condition. The equivalence of contraction and deletion in $K$ with contraction and deletion $\cM$ gives an alternative proof of \ref{th:skein}. 

In Example \ref{ex:s2-3} the ground set of the matroid $\cM$ consist of three vectors $\partial_2(\s_1)=a$, $\partial_2(\s_2)=c$, and  $\partial_2(\s_\infty)=-a-c$ in 3-space $\R^3=\langle a,b,c \rangle$, and one relations between them: $\partial_2(\s_1)+\partial_2(\s_2)+\partial_2(\s_\infty)=0$. The matroid theoretical contraction of an element $\partial_2(\s_\infty)\in\cM$ would give a matroid on two elements which are dependent. So the rank $\cM/\partial_2(\s_\infty)$ is equal to 1. Meanwhile, the topological contraction of the cell $\s_\infty$ would give a wedge of two spheres. The corresponding matroid would consist of two zero vectors, since the boundary of each of the two cells consist of the single point. Its rank would be 0.

As noted, an APC complex is a generalization of a connected graph, and $\cM$ is a generalization of the graphic matroid of a connected graph. Since the spanning trees of a connected a graph are exactly the bases of the graphic matroid, one might suspect that the CST's of an APC complex are exactly the bases of $\cM$. Indeed, Peterson \cite{Pe} shows that this is the case. And if one wishes to discard the APC requirement just as one may wish to consider disconnected graphs, Peterson shows that if we weaken condition (2) of Definition \ref{def:CST} to $\rbetti{j-1}(S)=\rbetti{j-1}(K)$, that the resulting ``$j$-trees" \cite[Propisition 5.1]{Pe} are exactly the bases of $\cM$ for any complex. One may think of the weakening of this condition for arbitrary complexes as the weakening of the notion of spanning trees to maximal spanning forests for nonconnected graphs. 
%this paragraph deals with the notions of spanning trees

Since we know that $T_K^j(X-1,Y-1)=T_\cM(X,Y)$ and that CST's essentially correspond to bases of $\cM$, we can define internal and external activities with respect to a CST by means of the matroid $\cM$. And we then have that
 $$T^j_K(X-1,Y-1)=\sum_{S\in \mathcal{T}_j}X^{\iota(S)}Y^{\epsilon(S)}.$$
Where $\epsilon(S)$ and $\iota(S)$ correspond respectively to the number of external activities and internal activities with respect to $S$. This then implies that the coefficients of $T_K^j(X-1,Y-1)$ are nonnegative, meaning the TKR polynomial is a sort of 
generating function. As we have demonstrated, it counts CST's. 
It should also satisfy a type of combinatorial reciprocity.
\end{rem}

\begin{rem}\label{rem-4var}\rm
When $K$ is embedded in a closed, oriented $(2k)$-manifold, the intersection pairing on $H_k(K)$
gives another numerical invariant of $S\in \mathcal{S}_k$ that depends on the embedding $S\hookrightarrow K$. In \cite{KR} this invariant is used to extend the TKR polynomial to 4 variables in a way analogous to the way the Tutte polynomial for graphs on surfaces is extended in \cite{Kru}. It would be interesting to phrase a contraction-deletion relation for this polynomial.
%I reread the [KR] definition, which defines the invariant for all subcomplexes of K so deleting is not actually a problem. So it actually seems completely plausible that a contraction deletion relation exists.

%In the case of the middle dimension Tutte-Krushkal-Renardy polynomial 
%of a cell decomposition of a smooth manifold there is a more general 4-variable polynomial introduced in \cite{KR} generalizing the corresponding polynomial for graphs on surfaces from \cite{Kru}. It would be interesting to find a contraction/deletion relations for it.
%However at the present moment we do not know how to do it. The problem is that the exponents of the additional variables involve the intersection pairing in the middle dimensional homology group which we do not know how to define for a non-compact manifold $K\setminus\s$ (see Remark \ref{rem-topdim}).
\end{rem}

\bigskip

\end{document}